\DeclareOldFontCommand{\rm}{\normalfont\rmfamily}{\mathrm}
\def\F{\Bbb F}
\def\N{\Bbb N}
\def\ad{\operatorname{ad}}
\def\d{\operatorname{d}}
\def\dim{\operatorname{dim}}
\def\Hom{\operatorname{Hom}}
\def\Span{\operatorname{Span}}
\def\Sa{\mathcal{S}}
\def\g{\frak g}
\def\gl{\frak{gl}}
\def\h{\frak h}
\def\a{\frak{a}}
\def\j{\frak{j}}
\def\ide{\frak{i}}
\theoremstyle{plain}\swapnumbers
\newtheorem{Theorem}{Theorem}[section]
\newtheorem{Lemma}[Theorem]{Lemma}
\newtheorem{Prop}[Theorem]{Proposition}
\title{Invariant Metrics on Nilpotent Lie algebras}
\author{R. Garc\'{\i}a-Delgado}
\address{Centro de Investigaci\'on en Matem\'aticas, A.C., 
Unidad M\'erida; Carretera Sierra Papacal Chuburna Puerto Km 5, 97302, Yucat\'an, M\'exico.}
\email{rosendo.garciadelgado@alumnos.uaslp.edu.mx}
\keywords {Nilpotent Lie algebras; Invariant bilinear forms; Tensor product; Associative and commutative algebras; Simple algebras}
\subjclass{
Primary: 17B30 17B05 15A63
%%%%%%%%%%%%%%%%%%%%%%%%%%%%%%%%%%%%
%%%     
%%%     15Axx		Basic linear algebra
%%%     
%%%     15A63  	%%%   Quadratic and bilinear forms, inner products
%%%     
%%%%%%%%%%%%%%%%%%%%%%%%%%%%%%%%%%%%
%%%     
%%%     17Axx		General nonassociative rings
%%%   
  	%%%   Quadratic algebras
%%%     
%%%%%%%%%%%%%%%%%%%%%%%%%%%%%%%%%%%%
%%%     
%%%     17Bxx   Lie algebras
%%%     
 	%%%   Structure theory
%%%       17B20  	%%%   Simple, semisimple, reductive (super)algebras
%%%       17B30  	%%%   Solvable, nilpotent (super)algebras
%%%       17B40  	%%%   Automorphisms, derivations, other operators
%%%  
      	%%%   Homological methods in Lie (super)algebras
 	%%%   Cohomology of Lie (super)algebras
%%%   
%%%%%%%%%%%%%%%%%%%%%%%%%%%%%%%%%%%%
 Secondary: 17B56 17B20
 %%%%%%%%%%%%%%%%%%%%%%%%%%%%%%%%%%%%
%%%     
%%%     15Axx		Basic linear algebra
%%%     
  	%%%   Quadratic and bilinear forms, inner products
%%%     
%%%%%%%%%%%%%%%%%%%%%%%%%%%%%%%%%%%%
%%%     
%%%     17Axx		General nonassociative rings
%%%   
%%%       17A45  	%%%   Quadratic algebras
%%%     
%%%%%%%%%%%%%%%%%%%%%%%%%%%%%%%%%%%%
%%%     
%%%     17Bxx   Lie algebras
%%%     
%%%       17B05  	%%%   Structure theory
%%%       17B20  	%%%   Simple, semisimple, reductive (super)algebras	%%%   Solvable, nilpotent (super)algebras 	%%%   Automorphisms, derivations, other operators
%%%       17B55  	%%%   Homological methods in Lie (super)algebras
%%%       17B56  	%%%   Cohomology of Lie (super)algebras
%%%   
%%%%%%%%%%%%%%%%%%%%%%%%%%%%%%%%%%%%
}
\date{\today}
\begin{document}

\maketitle

\begin{abstract}
We state criteria for a nilpotent Lie algebra $\g$ to admit an invariant metric. We use that $\g$ possesses two canonical abelian ideals $\ide(\g) \subset \mathfrak{J}(\g)$ to decompose the underlying vector space of $\g$ and then we state sufficient conditions for $\g$ to admit an invariant metric. The properties of the ideal $\mathfrak{J}(\g)$ allows to prove that if a current Lie algebra $\g \otimes \Sa$ admits an invariant metric, then there must be an invariant and non-degenerate bilinear map from $\Sa \times \Sa$ into the space of centroids of $\g/\mathfrak{J}(\g)$. We also prove that in any nilpotent Lie algebra $\g$ there exists a non-zero, symmetric and invariant bilinear form. This bilinear form allows to reconstruct $\g$ by means of an algebra with unit. We prove that this algebra is simple if and only if the bilinear form is an invariant metric on $\g$. 
\end{abstract}

\section*{Introduction}

Let $(\g,[\,\cdot\,,\,\cdot\,])$ be a Lie algebra over a field $\F$ with bracket $[\,\cdot,\,\cdot\,]$. The Lie algebra $(\g,[\,\cdot\,,\,\cdot\,])$ is said to be \emph{quadratic} if it is equipped with a non-degenerate, symmetric and bilinear form $B:\g \times \g \to \F$, which is \emph{invariant} under the adjoint representation $\ad:\g \to \gl(\g)$, that is: $B([x,y],z)=-B(y,[x,z])$, for all $x,y,z$ in $\g$. A quadratic Lie algebra is denoted by $(\g,[\,\cdot\,,\,\cdot\,],B)$ and $B$ is called an \emph{invariant metric}.
\smallskip

A semisimple Lie algebra is quadratic for the Killing form is an invariant metric defined on it. A reductive Lie algebra is an example of a non-semisimple quadratic Lie algebra; another more elaborated example is as follows: Let $(\g,[\,\cdot\,,\,\cdot\,])$ be a Lie algebra and in the vector space $\g \oplus \g^{\ast}$ define the skey-symmetric and bilinear map $[\,\cdot\,,\,\cdot\,]^{\prime}$ by $[x+\alpha,y+\beta]^{\prime}=[x,y]+\ad(x)^{\ast}(\beta)-\ad(y)^{\ast}(\alpha)$, and the symmetric bilinear form $B$ by $B(x+\alpha,y+\beta)=\alpha(y)+\beta(x)$, for all $x,y$ in $\g$ and $\alpha,\beta$ in $\g^{\ast}$. The triple $(\g \oplus \g^{\ast},[\,\cdot\,,\,\cdot\,]^{\prime},B)$ is a non-semisimple quadratic Lie algebra, as $\g^{\ast}$ is an abelian ideal. 
\smallskip

The algebraic study of quadratic non-semisimple Lie algebras can be traced on to the work of V. Kac (see \cite{Kac}), where appears an inductive way to construct solvable quadratic Lie algebras and it is also shown there that any solvable quadratic Lie algebra can be identified with this construction. In \cite{Favre}, it is stated the conditions for which these type of quadratic Lie algebras are isomorphic. In \cite{Med}, A. Medina and P. Revoy generalizes the results of V. Kac to any indecomposable, non-semisimple and quadratic Lie algebra; the proof of these results depends 
on the choice of a minimal ideal. In \cite{Kath}, I. Kath and M. Olbrich propose an alternative approach by using canonical ideals to obtain classification results. 
\smallskip

If the Killing form is non-degenerate then the Lie algebra is semisimple and therefore, is quadratic. For nilpotent Lie algebras, the Killing form is zero, so this bilinear form can not help us to determine whether a nilpotent Lie algebra is quadratic. One of the aims of this work is to provide criteria under which a nilpotent Lie algebra is quadratic. Although much has been said about nilpotent quadratic Lie algebras, we hope that these results -- which are based on classical tools and well-known results --, can take a step forward in obtaining a criterion that determines conditions for which an arbitrary Lie algebra is quadratic. 
\smallskip

If $\g$ is a nilpotent Lie algebra then there are canonical abelian ideals $\ide(\g) \subset \mathfrak{J}(\g)$ such that $[\g,\mathfrak{J}(\g)]\subset \ide(\g)$. Further, if $\g$ admits an invariant metric then $\ide(\g)^{\perp}=\mathfrak{J}(\g)$ (see \textbf{Lemma 4.2} in \cite{Kath} and \textbf{Lemma \ref{lema auxiliar 2}} below). If $\h$ is a subspace of $\g$ such that $\g=\h \oplus \mathfrak{J}(\g)$, we prove that a necessary condition for a \emph{current Lie algebra} $\g \otimes \Sa$ to admit an invariant metric --where $\Sa$ is a commutative and associtive algebra with unit--, is that there must be a bilinear map $\epsilon:\Sa \times \Sa \to \operatorname{Cent}(\h)$ with the same properties as that of an invariant metric: symmetric, invariant and non-degenerate (see \textbf{Thm. \ref{teorema current}}). In \cite{Zhu-Meng} it is proved that if $\g$ is simple then $\Sa$ admits an invariant metric. In \textbf{Cor. 1.3} of \cite{Garcia} this result is generalized to the case when $\g$ is indecomposable. The difference of the result obtained here is that the image of the bilinear map $\epsilon$ is contained in $\operatorname{Cent}(\h)$, even if $\h$ does not admit an invariant metric and $\Sa$ is not necessarily finite dimensional. In addition, in \cite{Garcia} it is assumed that the ground field $\F$ is algebraically closed.
\smallskip

In \S 1 we start by giving a review of the theory of abelian extensions of Lie algebras, as well as some standard results that we need in the sequel. Given an arbitrary Lie algebra $\h$ and a vector space $\a$, in \textbf{Prop. \ref{Prop ext abelianas 1}} we give a method to construct a Lie algebra in the space $\h \oplus \a \oplus \h^{\ast}$. In \textbf{Prop. \ref{proposicion X}} we prove that any non-abelian nilpotent quadratic Lie algebra can be constructed in this way. In \textbf{Prop. \ref{nilp cuad}} and \textbf{Prop. \ref{nilp cuad 2}} we give sufficient conditions for that such a Lie algebra to admit an invariant metric. In \S 3 we prove that if a current Lie algebra $\g \otimes \Sa$ admits an invariant metric, where $\g=\h \oplus \a \oplus \h^{\ast}$ is nilpotent and $\h$ is non-abelian, then there must be a non-degenerate bilinear map $\epsilon:\Sa \times \Sa \to \operatorname{Cent}(\h)$ such that $\epsilon(s,t)=\epsilon(st,1)$, for all $s,t$ in $\Sa$ (see \textbf{Thm. \ref{teorema current}}). In \S 4 we prove that any nilpotent Lie algebra has a non-zero, symmetric and invariant bilinear form. Using this bilinear form we construct an algebra with unit from which we can recover the original Lie algebra structure, and then we prove that this algebra is simple if and only if the bilinear form is an invariant metric (see \textbf{Thm. \ref{teorema 2}}). All the algebras and vector spaces considered in this work are finite dimensional over a unique field $\F$ of zero characteristic.

\section{abelian Extensions of Lie algebras}

Let $\rho:\g \to \gl(V)$ be a representation of a Lie algebra $(\g,[\,\cdot\,,\,\cdot\,])$ on a vector space $V$. We call $V$ a \emph{$\g$-module}. Let $C(\g;V)$ be the standard cochain complex with differential map $\d_{\rho}$ given by:
$$
\aligned
\d_{\rho}(\lambda)(x_1,\ldots,x_{p+1})&=\sum_{j=1}^{p+1}(-1)^{j-1}\rho(x_j)(\lambda(x_1,\ldots,x_{p+1}))\\
&+\sum_{i < j}(-1)^{i+j}\lambda([x_i,x_j],x_1,\ldots,x_{\hat{i}},\ldots,x_{\hat{j}},\ldots,x_{p+1}),
\endaligned
$$
where $p$ is a non-negative integer, $\lambda$ belongs to $C^p(\g;V)$ and $x_j$ belongs to $\g$, for all $1 \leq j \leq p+1$. If $\d_{\rho}(\lambda)=0$ then $\lambda$ is a $p$-cocycle in $C(\g;V)$. If the representation $\rho$ is zero we say that $V$ is a \emph{trivial $\g$-module} and we denote the differential map in $C(\h;V)$ by $\d_V$.
\smallskip

Let $\mathfrak{J}$ be an abelian ideal of $(\g,[\,\cdot\,,\,\cdot\,])$ and let $\h$ be a vector subspace complementary to $\mathfrak{J}$, that is $\g=\h \oplus \mathfrak{J}$. For each pair $x,y$ in $\h$, let $[x,y]_{\h}$ be in $\h$ and $\Lambda(x,y)$ be in $\mathfrak{J}$ such that:
\begin{equation}\label{descomposicion corchete}
[x,y]=[x,y]_{\h}+\Lambda(x,y).
\end{equation}
Let $[\,\cdot\,,\,\cdot\,]_{\h}$ be the skew-symmetric and bilinear map on $\h$ defined by $(x,y) \mapsto [x,y]_{\h}$. Since the bracket $[\,\cdot\,,\,\cdot\,]$ satisfies the Jacobi identity, then $[\,\cdot\,,\,\cdot\,]_{\h}$ is a Lie bracket in $\h$. We denote by $\Lambda:\h \times \h \to \mathfrak{J}$, the skew-symmetric and bilinear map $(x,y) \mapsto \Lambda(x,y)$. 
\smallskip

Let $R:\h \to \gl(\mathfrak{J})$ be the linear map defined by $R(x)(U)=[x,U]$, for all $x$ in $\h$ and $U$ in $\mathfrak{J}$. Let $x,y$ be in $\h$ and $U$ be in $\mathfrak{J}$; the cyclic sum on $[x,[y,U]]$ and the fact that $\mathfrak{J}$ is abelian, implies that $R$ is a representation of $(\h,[\,\cdot\,,\,\cdot\,]_{\h})$. In addition, the cyclic sum on $[x,[y,z]]$, where $x,y$ and $z$ are in $\h$, implies: 
\begin{equation*}\label{junio1}
\begin{split}
& R(x)(\Lambda(y,z)+R(y)(\Lambda(z,x))+R(z)(\Lambda(x,y))\\
&+\Lambda(x,[y,z]_{\h})+\Lambda(y,[z,x]_{\h})+\Lambda(z,[x,y]_{\h})=0.
\end{split}
\end{equation*}
Then $\Lambda$ is a $2$-cocycle in the complex $C(\h;\mathfrak{J})$.
\smallskip

Conversely, let $R:\h\to\gl(\mathfrak{J})$ be a representation of a Lie algebra $(\h,[\,\cdot\,,\,\cdot\,]_{\h})$ in a vector space $\mathfrak{J}$ and let $\Lambda$ be a $2$-cocycle in $C(\h;\mathfrak{J})$. The skew-symmetric and bilinear map $[\,\cdot\,,\,\cdot\,]$ defined in $\g=\h\oplus\mathfrak{J}$, by:
\begin{equation}\label{corchete de ext abeliana}
\aligned
\,[x,y] & = [x,y]_{\h} + \Lambda(x,y),\quad [x,U]= R(x)(U),\\
\,[U,V] & =0,\,\,\text{ for all }x,y \in \g,\,\text{ and }U,V \in \mathfrak{J},
\endaligned
\end{equation}
is a Lie bracket. The pair $(\g,[\,\cdot\,,\,\cdot\,])$ is the \emph{abelian extension of $(\h,[\,\cdot\,,\,\cdot\,]_{\h})$ by $\mathfrak{J}$ associated to
$R$ and $\Lambda$}. We denote this Lie algebra by $\h(\Lambda,R)$.
\smallskip

Suppose that $(\g,[\,\cdot\,,\,\cdot\,])$ has abelian ideals $\ide$ and $\mathfrak{J}$ such that:
\begin{equation}\label{condiciones para los ideales}
\text{(a)} \ \ \ \mathfrak{J}\ \ \text{is abelian;} \qquad\quad
\text{(b)} \ \ \  \ide \subset \mathfrak{J}; \qquad\quad
\text{(c)} \ \ \  [\g,\mathfrak{J}] \subset \ide.
\end{equation}

Let $\h$ be a subspace of $\g$ such that $\g=\h\oplus \mathfrak{J}$, and let $\a$ be a subspace of $\g$ satisfying $\mathfrak{J}=\a \oplus \ide$; thus $\g=\h\oplus\a\oplus\ide$. For $x,y$ in $\h$, we write $[x,y]=[x,y]_{\h}+\Lambda(x,y)$, where $[x,y]_{\h}$ belongs to $\h$ and $\Lambda(x,y)$ belongs to $\mathfrak{J}$ (see \eqref{descomposicion corchete}). We decompose $\Lambda$ as follows: $\Lambda(x,y)=\lambda(x,y)+\mu(x,y)$, where $\lambda(x,y)$ is in $\a$ and $\mu(x,y)$ is in $\ide$. This yields the skew-symmetric and bilinear maps $\lambda:\h \times \h \to \a$, $(x,y) \mapsto \lambda(x,y)$, and $\mu:\h \times \h \to \ide$, $(x,y) \mapsto \mu(x,y)$.
\smallskip

Due to $\mathfrak{J}$ is equal to $\a \oplus \ide$ and $[\g,\mathfrak{J}]$ is contained in $\ide$, the representation $R:\h \to \gl(\mathfrak{J})$ has the following decomposition:
$$
\varphi(x)=R(x)\vert_{\a}:\a \to \ide,\quad \text{ and }\quad \rho(x)=R(x)\vert_{\ide}:\ide \to \ide,\quad \text{ where }x \in \h.
$$
This yields linear maps $\varphi:\h \to \Hom(\a;\ide)$ and $\rho:\h \to \gl(\ide)$ such that $[x,u+\alpha]=R(x)(u+\alpha)=\varphi(x)(u)+\rho(x)(\alpha)$, for all $u$ in $\a$ and $\alpha$ in $\ide$, and we write $R=(\varphi,\rho)$. Thus, the bracket $[\,\cdot\,,\,\cdot\,]$ in $\g$ takes the form:
\begin{equation*}\label{corchete en terminos de los ideales 2}
{\aligned
\,[x,y] & = [x,y]_{\h} + \lambda(x,y) + \mu(x,y),
\\
\,[x,u+\alpha] & = \varphi(x)(u)+\rho(x)(\alpha),
\endaligned}
\end{equation*}
for all $x,y$ in $\h$, $u$ in $\a$ and $\alpha$ in $\ide$. Due to $R=(\varphi,\rho)$ is a representation then we have the identities:
\begin{align}
\label{condiciones para la desc de R-1}\varphi([x,y]_{\h}) & =\rho(x)\circ\varphi(y)-\rho(y)\circ\varphi(x),
\\
\label{condiciones para la desc de R-2}\rho([x,y]_{\h}) & =
\rho(x)\circ\rho(y)-\rho(y)\circ\rho(x),\,\,\text{ for all }x,y \in \h.
\end{align}
From \eqref{condiciones para la desc de R-2} it follows that
$\rho:\h\to\gl(\ide)$ is a representation of $(\h,[\,\cdot\,,\,\cdot\,]_{\h})$ in $\ide$. The condition \eqref{condiciones para la desc de R-1} states that $\varphi$ is a $1$-cocycle in the complex $C(\h;\Hom(\a,\ide))$ with representation $\bar{\rho}:\h\to\gl(\Hom(\a,\ide))$ defined by: 
\begin{equation}\label{definicion de rho barra}
\bar{\rho}(x)(\tau)=\rho(x)\circ \tau,\,\,\,\text{ for all }\tau \in \Hom(\a;\ide).
\end{equation}

Since $[\h,\a]$ is contained in $\ide$ and has no component in $\a$, we assume that $\a$ is a trivial $\h$-module. Due to $\mathfrak{J}=\a \oplus \ide$, we write the differential map $\d_R$ of $C(\h;\mathfrak{J})$ as follows: 

Let $x_1,\ldots,x_p$ be in $\g$ and let us write: 
$$
\textbf{X}_p=(x_1,\ldots,x_p),\,\,\text{ and }\,\,\,\textbf{X}_{p}^j=(x_1,\ldots,x_{\hat{j}}\ldots,x_p),\,\,\,\,\,1 \leq j \leq p.
$$
For a given $\Lambda$ in $C^p(\h;\mathfrak{J})$, let $\lambda(\textbf{X}_p)$ be in $\a$ and $\mu(\textbf{X}_p)$ in $\ide$ such that: $\Lambda(\textbf{X}_p)=\lambda(\textbf{X}_p)+\mu(\textbf{X}_p)$. The maps $\textbf{X}_p \mapsto \lambda(\textbf{X}_p)$ and $\textbf{X}_p \mapsto \mu(\textbf{X}_p)$, belongs to $C^p(\h;\a)$ and $C^p(\h;\ide)$, respectively; we denote these maps by $\lambda$ and $\mu$, respectively. Let $\d_{\a}$ be the differential in $C(\h;\a)$, and let $\d_{\rho}$ be the differential in $C(\h;\ide)$. Due to $R=(\varphi,\rho)$, we have:
\begin{equation}\label{descomposicion de diferencial 1}
\d_R(\Lambda)(\textbf{X}_{p+1})\!=\!\d_{\a}(\lambda)(\textbf{X}_{p+1})+\sum_{j=1}^{p+1}(-1)^{j-1}\varphi(x_j)\left(\lambda(\textbf{X}_{p+1}^j)\right)+\d_{\rho}(\mu)(\textbf{X}_{p+1})
\end{equation}
This suggests to consider the map $e_{\varphi}:C(\h;\a) \to C(\h;\ide)$, defined by
\begin{equation}\label{definicion de e varphi}
e_{\varphi}(\lambda)(\textbf{X}_{p+1})=\sum_{j=1}^{p+1}(-1)^{j-1}\varphi(x_j)\left(\lambda(\textbf{X}_{p+1}^j)\right),\,\,\text{ for all }\lambda \in C^p(\h;\a).
\end{equation}
Thus, we can rewrite \eqref{descomposicion de diferencial 1} as:
\begin{equation}\label{descomposicion de diferencial 2}
\d_R(\Lambda)(\textbf{X}_{p+1})=\d_{\a}(\lambda)(\textbf{X}_{p+1})+e_{\varphi}(\lambda)(\textbf{X}_{p+1})+\d_{\rho}(\mu)(\textbf{X}_{p+1}),
\end{equation}
where $\d_{\a}(\lambda)(\textbf{X}_{p+1})$ belongs to $\a$ and $e_{\varphi}(\lambda)(\textbf{X}_{p+1})+\d_{\rho}(\mu)(\textbf{X}_{p+1})$ belongs to $\ide$. We make the identification $\Lambda=(\lambda,\mu)$, where $\lambda$ is in $C^p(\h;\a)$ and $\mu$ is in $C^p(\h;\ide)$. By \eqref{descomposicion de diferencial 2}, the complex $C(\h;\mathfrak{J})=C(\h;\a) \oplus C(\h;\ide)$ has the differential map:
\begin{equation}\label{descomposicion de diferencial 3}
\d_R(\lambda,\mu)\!=\!(\d_{\a}(\lambda),e_{\varphi}(\lambda)+\d_{\rho}(\mu)),\,\text{ for all }(\lambda,\mu) \!\in \!C(\h;\a) \oplus C(\h;\ide).
\end{equation}
Since $\d_R^2=\d_{\a}^2=\d_{\rho}^2=0$, then $e_{\varphi}\circ \d_{\a}=-\d_{\rho}\circ e_{\varphi}$. We summarize what we obtained in the following statement.

\begin{Prop}\label{Prop ext abelianas 1}{\sl
Let $(\h,[\,\cdot\,,\,\cdot\,]_{\h})$ be a Lie algebra, $\a$ be a trivial $\h$-module, and $\rho:\h \to \gl(\ide)$ be a representation of $(\h,[\,\cdot\,,\,\cdot\,]_{\h})$ in a vector space $\ide$. Let $\varphi:\h\to\Hom(\a;\ide)$ be a $1$-cocycle in the complex $C(\h;\Hom(\a;\ide))$
associated to the representation $\bar{\rho}:\h\to\gl(\Hom(\a,\ide))$
defined by \eqref{definicion de rho barra}. Let $C(\h;\a) \oplus C(\h;\ide)$ be the complex with differential map $\d_R$ given in \eqref{descomposicion de diferencial 3}. For each $(\lambda,\mu)$ in $C^2(\h;\a) \oplus C^2(\h;\ide)$, let $[\,\cdot\,,\,\cdot\,]$ be the skew-symmetric and bilinear map in $\h \oplus \a \oplus \ide$ defined by:
$$
\aligned
&\,[x,y]=[x,y]_{\h}+\lambda(x,y)+\mu(x,y),\quad \text{ for all }x,y \in \h,\\
&\,[x,u+\alpha]=\varphi(x)(u)+\rho(x)(\alpha),\,\text{ for all }x \in \h,\,u \in \a,\text{ and }\alpha \in \ide.
\endaligned
$$
Then, $[\,\cdot\,,\,\cdot\,]$ is a Lie bracket in $\h \oplus \a \oplus \ide$ if and only if $\d_R(\lambda,\mu)=0$. 
}
\end{Prop}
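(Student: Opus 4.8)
The plan is to prove this equivalence by directly verifying the Jacobi identity for the bracket $[\,\cdot\,,\,\cdot\,]$ and showing that the collection of Jacobi conditions is precisely equivalent to the single equation $\d_R(\lambda,\mu)=0$. First I would observe that, since the underlying representation data $(\h,[\,\cdot\,,\,\cdot\,]_{\h})$, $\rho$, and $\bar\rho$ are already given to be genuine Lie-algebra and module structures, the reconstruction $R=(\varphi,\rho)$ assembles into a representation of $\h$ on $\mathfrak{J}=\a\oplus\ide$ precisely because $\varphi$ is a $1$-cocycle for $\bar\rho$: this is exactly identity \eqref{condiciones para la desc de R-1}, combined with \eqref{condiciones para la desc de R-2} which holds since $\rho$ is a representation. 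Hence $R$ is a bona fide representation, and the construction falls under the abelian-extension framework recorded in \eqref{corchete de ext abeliana}.

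Next I would invoke the standard fact, established in the discussion preceding the statement, that for an abelian extension $\h(\Lambda,R)$ determined by a representation $R$ and a bilinear map $\Lambda$, the bracket \eqref{corchete de ext abeliana} satisfies the Jacobi identity if and only if $\Lambda$ is a $2$-cocycle in $C(\h;\mathfrak{J})$, i.e.\ $\d_R(\Lambda)=0$. The only nontrivial Jacobi relations are those involving at least two arguments from $\h$; the cases with two or three arguments in the abelian ideal $\mathfrak{J}$ vanish automatically because $[\mathfrak{J},\mathfrak{J}]=0$ and $R$ is a representation. This reduces the whole problem to the single cocycle condition $\d_R(\Lambda)=0$ with $\Lambda=\lambda+\mu$.

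Then I would translate this cocycle condition into the component form. Using the decomposition $\mathfrak{J}=\a\oplus\ide$ together with the triviality of $\a$ as an $\h$-module and the explicit splitting of the differential recorded in \eqref{descomposicion de diferencial 1}–\eqref{descomposicion de diferencial 3}, the equation $\d_R(\Lambda)=0$ decomposes into its $\a$-component and its $\ide$-component. By \eqref{descomposicion de diferencial 3} these are exactly $\d_{\a}(\lambda)=0$ and $e_{\varphi}(\lambda)+\d_{\rho}(\mu)=0$, which together are by definition the statement $\d_R(\lambda,\mu)=0$ under the identification $\Lambda=(\lambda,\mu)$. Thus the Jacobi identity holds if and only if $\d_R(\lambda,\mu)=0$, as claimed.

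The only real subtlety, and the step I would take care over, is verifying that $R=(\varphi,\rho)$ is indeed a representation on all of $\mathfrak{J}$ rather than merely a pair of maps: one must check that $[R(x),R(y)]=R([x,y]_{\h})$ as operators on $\a\oplus\ide$, and the off-diagonal block of this operator identity is precisely the $1$-cocycle condition \eqref{condiciones para la desc de R-1} hypothesized in the statement, while the diagonal block is \eqref{condiciones para la desc de R-2}. Once this is in place, everything else is the bookkeeping already carried out in the paragraphs preceding the proposition, so the argument is essentially a matter of citing \eqref{corchete de ext abeliana} and \eqref{descomposicion de diferencial 3} and noting that no additional conditions arise from brackets landing inside the abelian ideal.
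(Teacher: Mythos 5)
Your proof is correct and follows essentially the same route as the paper's: both arguments first use the $1$-cocycle hypothesis \eqref{condiciones para la desc de R-1} on $\varphi$ together with \eqref{condiciones para la desc de R-2} to conclude that $R=(\varphi,\rho)$ is a genuine representation on $\mathfrak{J}=\a\oplus\ide$, and then reduce the Jacobi identity to the abelian-extension fact that the bracket \eqref{corchete de ext abeliana} is Lie if and only if $\Lambda=(\lambda,\mu)$ is a $2$-cocycle, identified with $\d_R(\lambda,\mu)=0$ via the splitting \eqref{descomposicion de diferencial 3}. Your version merely spells out the case analysis of the Jacobi identity that the paper leaves implicit, which is fine.
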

\begin{proof}
It is clear that if $[\,\cdot\,,\,\cdot\,]$ is a Lie bracket then $\d_R(\lambda,\mu)=0$. Now suppose that $\d_R(\lambda,\mu)=0$, where $R=(\varphi,\rho):\h \to \gl(\a \oplus \ide)$. By hypothesis, $\varphi$ and $\rho$ satisfy \eqref{condiciones para la desc de R-1} and \eqref{condiciones para la desc de R-2}, respectively, then $R$ is a representation of $(\h,[\,\cdot\,,\,\cdot\,]_{\h})$ on $\mathfrak{J}=\a \oplus \ide$, for which $\Lambda=(\lambda,\mu)$ is a 2-cocycle in $C(\h;\a \oplus \ide)$. Then by \eqref{corchete de ext abeliana}, $[\,\cdot\,,\,\cdot\,]$ is a Lie bracket in $\h \oplus \a \oplus \ide$. 
\end{proof}

The Lie algebra in $\h \oplus \a \oplus \ide$ of \textbf{Prop. \ref{Prop ext abelianas 1}}, is an abelian extension of $(\h,[\,\cdot\,,\,\cdot\,]_{\h})$ by $\mathfrak{J}=\a \oplus \ide$, associated to the representation $R=(\varphi,\rho):\h \to \gl(\mathfrak{J})$ and the 2-cocycle $\Lambda=(\lambda,\mu)$. Thus, the Lie algebra $\h(\Lambda,R)$ of \textbf{Prop. \ref{Prop ext abelianas 1}} we denote it by $\h(\lambda,\mu,\varphi,\rho)$. In addition we have the following short exact sequence of complexes:
\begin{equation}\label{sucesion de complejos}
\xymatrix{
0  \ar[r] & C(\h;\ide)  \ar[r] & C(\h;\a) \oplus C(\h;\ide)  \ar[r] & C(\h;\a)  \ar[r]  & 0
}
\end{equation}
where $e_{\varphi}:C(\h;\a) \to C(\h;\ide)$ is the connecting homomorphism. 
We denote the cohomology group of $C(\h;\a) \oplus C(\h;\ide)$ by $H(\h;\a \oplus \ide)$. For abelian extensions, the elements in $H^2(\h,\mathfrak{J})$ are in one-to-one correspondence with isomorphism classes of extensions of $(\h,[\,\cdot\,,\,\cdot\,]_{\h})$ by $R=(\varphi,\rho)$ (see \cite{Che}, \textbf{Thm. 26.2}). In accordance with the decomposition $\mathfrak{J}=\a \oplus \ide$, we have the following.

\begin{Prop}\label{criterio cohomologico}{\sl
Let $\h(\lambda,\mu,\varphi,\rho)$ and 
$\h(\lambda^{\prime},\mu^{\prime},\varphi,\rho)$ 
be two Lie algebras constructed
as in {\bf Prop. \ref{Prop ext abelianas 1}}
associated to the same representation $R=(\varphi,\rho)$. If $(\lambda,\mu)$ and $(\lambda^\prime,\mu^\prime)$ are in the same cohomology class in $H^2(\h;\a \oplus \ide)$, then there exists an isomorphism $\Psi$ of Lie algebras between $\h(\lambda,\mu,\varphi,\rho)$ and $\h(\lambda^{\prime},\mu^{\prime},\varphi,\rho)$ such that $\Psi\vert_{\a}=\operatorname{Id}_{\a}$ and $\Psi\vert_{\ide}=\operatorname{Id}_{\ide}$.}
\end{Prop}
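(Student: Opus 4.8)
The plan is to produce the isomorphism by the classical ``shear'' construction for abelian extensions, adapted to the splitting $\mathfrak{J}=\a\oplus\ide$ recorded in \eqref{descomposicion de diferencial 3}: the map will be the identity on $\mathfrak{J}$ and will add a correction term to $\h$. First I would unpack the hypothesis. Since $(\lambda,\mu)$ and $(\lambda',\mu')$ determine the same class of $H^2(\h;\a\oplus\ide)$, there is a $1$-cochain $(\sigma,\nu)\in C^1(\h;\a)\oplus C^1(\h;\ide)$ with $(\lambda',\mu')-(\lambda,\mu)=\d_R(\sigma,\nu)$. By the explicit form of the differential in \eqref{descomposicion de diferencial 3} this splits as
\begin{equation*}
\lambda'-\lambda=\d_{\a}(\sigma),\qquad \mu'-\mu=e_{\varphi}(\sigma)+\d_{\rho}(\nu).
\end{equation*}
Evaluating on $x,y\in\h$ and using that $\a$ is a trivial $\h$-module, the first identity reads $\lambda'(x,y)-\lambda(x,y)=-\sigma([x,y]_{\h})$, while by \eqref{definicion de e varphi} together with the formula for $\d_{\rho}$ the second reads
\begin{equation*}
\mu'(x,y)-\mu(x,y)=\varphi(x)(\sigma(y))-\varphi(y)(\sigma(x))+\rho(x)(\nu(y))-\rho(y)(\nu(x))-\nu([x,y]_{\h}).
\end{equation*}

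Next I would write down the candidate map. Let $\Psi:\h\oplus\a\oplus\ide\to\h\oplus\a\oplus\ide$ be the linear map fixing $\a$ and $\ide$ pointwise and sending $x\mapsto x-\sigma(x)-\nu(x)$ for $x\in\h$. It is block upper triangular with identity diagonal blocks, hence a linear isomorphism satisfying $\Psi\vert_{\a}=\Id_{\a}$ and $\Psi\vert_{\ide}=\Id_{\ide}$, so only the bracket-preservation $\Psi([X,Y])=[\Psi(X),\Psi(Y)]$ remains, to be checked on the three types of arguments. When $X,Y\in\mathfrak{J}$ both sides vanish because $\mathfrak{J}$ is abelian in both algebras. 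When $X=x\in\h$ and $Y=u+\alpha\in\mathfrak{J}$, the correction $-\sigma(x)-\nu(x)$ lies in the abelian ideal $\mathfrak{J}$ and is killed by the bracket, so both sides equal $\varphi(x)(u)+\rho(x)(\alpha)$, the maps $\varphi,\rho$ being common to $\h(\lambda,\mu,\varphi,\rho)$ and $\h(\lambda',\mu',\varphi,\rho)$.

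The only case with content, and the step I expect to be the main obstacle, is $x,y\in\h$. Here I would expand $[\Psi(x),\Psi(y)]$ in $\h(\lambda',\mu',\varphi,\rho)$ using the bracket of \textbf{Prop. \ref{Prop ext abelianas 1}}, discard the (vanishing) bracket of the two $\mathfrak{J}$-corrections, and then project the identity $\Psi([x,y])=[\Psi(x),\Psi(y)]$ onto $\h$, $\a$ and $\ide$. The $\h$-component is trivial. The $\a$-component collapses exactly to $\lambda'(x,y)-\lambda(x,y)=-\sigma([x,y]_{\h})$, the first identity above; the $\ide$-component collects the $\varphi$- and $\rho$-terms arising from $[x,-\sigma(y)-\nu(y)]$ and $[-\sigma(x)-\nu(x),y]$ together with the term $-\nu([x,y]_{\h})$ coming from $\Psi$ applied to the $\ide$-part of $[x,y]$, and matches exactly the second identity above. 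Thus both projections hold by construction, so the verification is pure bookkeeping of signs; it is precisely this bookkeeping that forces the minus sign in the definition of $\Psi$ (the opposite coboundary convention would instead call for $x\mapsto x+\sigma(x)+\nu(x)$). Once the $(\h,\h)$ case is confirmed, $\Psi$ is the desired Lie algebra isomorphism fixing $\a$ and $\ide$.
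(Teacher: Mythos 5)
Your proof is correct and follows essentially the same route as the paper: the paper likewise writes $(\lambda',\mu')=(\lambda,\mu)+\d_R(L,M)$ for $1$-cochains $L:\h\to\a$, $M:\h\to\ide$ and defines $\Psi(x)=x-L(x)-M(x)$ on $\h$, with $\Psi$ the identity on $\a\oplus\ide$, exactly your shear map with $(\sigma,\nu)=(L,M)$. The only difference is that the paper asserts bracket-preservation without verification, whereas you carry out the case-by-case check (including the correct sign bookkeeping), which confirms the claim.
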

\begin{proof}
If $(\lambda,\mu)$ and $(\lambda^\prime,\mu^\prime)$
are in the same cohomology class in $H(\h;\a \oplus \ide)$, then there are linear maps $L:\h \to \a$ and $M:\h \to \ide$ such that $(\lambda^{\prime},\mu^{\prime})=(\lambda,\mu)+\d_R(L,M)$. Let $\Psi$ be the map between $\h(\lambda,\mu,\varphi,\rho)$ and $\h(\lambda^{\prime},\mu^{\prime},\varphi,\rho)$, defined by: $\Psi(x)=x-L(x)-M(x)$, for all $x$ in $\h$, and $\Psi(u+\alpha)=u+\alpha$, for all $u$ in $\a$ and $\alpha$ in $\ide$. Then $\Psi$ is an isomorphism of Lie algebras.
\end{proof}

We denote by $Z(\g)$ the center of $(\g,[\,\cdot\,,\,\cdot\,])$. The \emph{derived central series} $Z_1(\g) \subset Z_2(\g) \subset \cdots \subset Z_{\ell}(\g) \subset \cdots$ is defined by $Z_1(\g)=Z(\g)$ and $Z_{\ell}(\g)=\pi_{\ell-1}^{-1}\left(Z\left(\g/Z_{\ell-1}(\g)\right)\right)$, where $\ell>1$ and $\pi_{\ell-1}:\g \to \g/Z_{\ell-1}(\g)$ is the canonical projection. The \emph{descending central series} $\g^0 \supset \g^1 \supset \cdots \supset \g^{\ell} \supset \cdots$ is defined by: $\g^0=\g$ and $\g^{\ell}=[\g,\g^{\ell-1}]$, where $\ell>1$.
\smallskip

The following result defines
the ideals $\ide(\g)$ and $\mathfrak{J}(\g)$, for a nilpotent Lie algebra $(\g,[\,\cdot\,,\,\cdot\,])$. This result is a particular case of the one given in \textbf{Lemma 4.2} of \cite{Kath} and its proof can be consulted there. 

\begin{Lemma}\label{lema auxiliar 2}{\sl
Let $(\g,[\,\cdot\,,\,\cdot\,])$ be a finite dimensional nilpotent Lie algebra over a field $\F$. Let
$$
\ide(\g)=\sum_{k \in \N}Z_k(\g) \cap \g^k \ \quad\text{and}\quad\ \mathfrak{J}(\g)=\bigcap_{k \in \N}(Z_k(\g)+\g^k).
$$
Then,
\smallskip

\textbf{(i)} $\ide(\g) \subset \mathfrak{J}(\g)$. 
\smallskip

\textbf{(ii)} There exists $m\geq 1$ such that: $\displaystyle{\mathfrak{J}(\g)=Z(\g)+\sum_{k=1}^{m-1}Z_{k+1}(\g) \cap \g^k}$.

\textbf{(iii)} $\mathfrak{J}(\g)$ is abelian and $[\g,\mathfrak{J}(\g)] \subset \ide(\g)$.
\smallskip

\textbf{(iv)} If $(\g,[\,\cdot\,,\,\cdot\,])$ admits an invariant metric then $\ide(\g)^{\perp}=\mathfrak{J}(\g)$.
}
\end{Lemma}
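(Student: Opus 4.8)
The plan is to exploit only the monotonicity of the two central series together with the standard commutator estimate relating them, postponing all use of the metric to the very end. Throughout write $n$ for the nilpotency class, so that $\g^n=0$ and $Z_n(\g)=\g$, with the convention $Z_0(\g)=0$; since both series stabilize, the defining intersection reduces to $\mathfrak{J}(\g)=\bigcap_{k=1}^{n-1}(Z_k(\g)+\g^k)$, every other term being $\g$. Part \textbf{(i)} is then immediate from the dichotomy: for $x\in Z_k(\g)\cap\g^k$ and any $m$, either $m\le k$, whence $x\in\g^k\subseteq\g^m\subseteq Z_m(\g)+\g^m$, or $m>k$, whence $x\in Z_k(\g)\subseteq Z_m(\g)\subseteq Z_m(\g)+\g^m$; so $x\in\mathfrak{J}(\g)$.

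For \textbf{(ii)} I would first record the easy inclusion ``$\supseteq$'': each generator lies in every $Z_j(\g)+\g^j$ by the same dichotomy used for (i), so $Z(\g)+\sum_{k=1}^{n-1}Z_{k+1}(\g)\cap\g^k\subseteq\mathfrak{J}(\g)$. The substance is the reverse inclusion, which I would obtain by downward induction on $i$ of the statement
$$\mathfrak{J}(\g)\subseteq Z_i(\g)+\sum_{j=i}^{n-1}\bigl(Z_{j+1}(\g)\cap\g^j\bigr),$$
the case $i=n$ reading $\mathfrak{J}(\g)\subseteq\g$. For the step from $i+1$ to $i$, take $x\in\mathfrak{J}(\g)$ and, by the inductive hypothesis, write $x=z+\sum_{j=i+1}^{n-1}w_j$ with $z\in Z_{i+1}(\g)$ and $w_j\in Z_{j+1}(\g)\cap\g^j$; since $x\in Z_i(\g)+\g^i$, split $x=a+b$ with $a\in Z_i(\g)$, $b\in\g^i$. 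As $w_j\in\g^j\subseteq\g^i$ for $j\ge i+1$, the element $b-\sum_j w_j$ lies in $\g^i$, while simultaneously $b-\sum_j w_j=z-a\in Z_{i+1}(\g)$; hence it lies in $Z_{i+1}(\g)\cap\g^i$, and $x=a+(b-\sum_j w_j)+\sum_{j=i+1}^{n-1}w_j$ has the required form. Taking $i=1$ yields (ii) with $m=n$.

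With the description (ii) available, \textbf{(iii)} follows from the commutator estimate $[\g^p,Z_q(\g)]\subseteq Z_{q-p-1}(\g)$, proved by induction on $p$ via the Jacobi identity starting from $[\g,Z_q(\g)]\subseteq Z_{q-1}(\g)$. For $w\in Z_{k+1}(\g)\cap\g^k$ and $x\in\g$ one gets $[x,w]\in Z_k(\g)\cap\g^{k+1}\subseteq Z_{k+1}(\g)\cap\g^{k+1}\subseteq\ide(\g)$, and since $[\g,Z(\g)]=0$ the decomposition (ii) gives $[\g,\mathfrak{J}(\g)]\subseteq\ide(\g)$. For abelianness, take generators $w_k\in Z_{k+1}(\g)\cap\g^k$ and $w_l\in Z_{l+1}(\g)\cap\g^l$; reading $[w_k,w_l]$ through the estimate in two ways yields $[w_k,w_l]\in Z_{l-k}(\g)\cap Z_{k-l}(\g)$, which is $0$ because one index is negative when $k\neq l$ and both are $Z_0(\g)=0$ when $k=l$; together with the central terms this forces $[\mathfrak{J}(\g),\mathfrak{J}(\g)]=0$.

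Finally, for \textbf{(iv)} I would prove the duality $(\g^k)^{\perp}=Z_k(\g)$ by induction on $k$, the base $k=0$ being $\g^{\perp}=0=Z_0(\g)$. Invariance turns $z\in([\g,\g^{k-1}])^{\perp}$ into $B(\g^{k-1},[\g,z])=0$, i.e. $[\g,z]\subseteq(\g^{k-1})^{\perp}=Z_{k-1}(\g)$, which by definition of the upper central series means exactly $z\in Z_k(\g)$; nondegeneracy and finite dimensionality also give $Z_k(\g)^{\perp}=(\g^k)^{\perp\perp}=\g^k$. Then, using $(U\cap V)^{\perp}=U^{\perp}+V^{\perp}$ and $(\sum_k U_k)^{\perp}=\bigcap_k U_k^{\perp}$, compute
$$\ide(\g)^{\perp}=\bigcap_k\bigl(Z_k(\g)\cap\g^k\bigr)^{\perp}=\bigcap_k\bigl(Z_k(\g)^{\perp}+(\g^k)^{\perp}\bigr)=\bigcap_k\bigl(\g^k+Z_k(\g)\bigr)=\mathfrak{J}(\g).$$
I expect the genuine obstacle to be the reverse inclusion in (ii): the downward induction must be arranged so that at each stage the leftover element of $Z_{i+1}(\g)$ can be corrected by something lying simultaneously in $\g^i$ and in $Z_{i+1}(\g)$, and securing that simultaneity is the only delicate point; everything else is monotonicity, the Jacobi estimate, and standard orthogonality identities for a nondegenerate form.
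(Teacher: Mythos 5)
Your proof is correct in all four parts, but it is worth noting that the paper does not actually prove this lemma: it states that the result is a particular case of \textbf{Lemma 4.2} of \cite{Kath} and defers the proof there. Your argument is therefore a genuinely self-contained alternative, and each step checks out. The dichotomy $m\le k$ versus $m>k$ settles (i); the downward induction for (ii) is sound, the key point being exactly the one you isolate --- the leftover $b-\sum_j w_j$ equals $z-a\in Z_{i+1}(\g)$ while lying in $\g^i$, so it is absorbed into the new summand $Z_{i+1}(\g)\cap\g^i$ --- and it yields the lemma with $m$ equal to the nilpotency index; the estimate $[\g^p,Z_q(\g)]\subseteq Z_{q-p-1}(\g)$ gives (iii) cleanly, both the containment $[\g,Z_{k+1}(\g)\cap\g^k]\subseteq Z_{k+1}(\g)\cap\g^{k+1}\subseteq\ide(\g)$ and abelianness via $Z_{l-k}(\g)\cap Z_{k-l}(\g)=\{0\}$; and for (iv) the duality $(\g^k)^{\perp}=Z_k(\g)$, proved inductively from invariance and the definition $z\in Z_k(\g)\Leftrightarrow[\g,z]\subseteq Z_{k-1}(\g)$, combined with $\bigl(\sum_k U_k\bigr)^{\perp}=\bigcap_k U_k^{\perp}$ and $(U\cap V)^{\perp}=U^{\perp}+V^{\perp}$ (valid for a non-degenerate form in finite dimension), gives $\ide(\g)^{\perp}=\mathfrak{J}(\g)$. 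What your route buys is that the nilpotent case becomes elementary and self-contained, using only monotonicity of the two central series, one Jacobi-identity estimate, and standard orthogonality; what the citation buys the paper is generality, since the result in \cite{Kath} is formulated in a broader setting of which the present nilpotent situation is a special case.
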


\section{Quadratic Lie Algebras with abelian ideals $\ide$ and $\mathfrak{J}$}

For a bilinear form $B$ on a vector space $\g$, let $B^{\flat}:\g \to \g^{\ast}$ be the map defined by $B^{\flat}(x)(y)=B(x,y)$, for all $x,y$ in $\g$.
\smallskip

What follows can be applied to any quadratic Lie algebra $(\g,[\,\cdot\,,\,\cdot\,],B)$ possessing abelian ideals $\ide=\ide(\g)$ and $\mathfrak{J}=\mathfrak{J}(\g)$ such that $\ide \subset \mathfrak{J}$, $\mathfrak{J}=\ide^{\perp}$ and $[\g,\mathfrak{J}] \subset \ide$. For example, if $\g$ is nilpotent and non-abelian, then $Z(\g) \neq \{0\}$, as $\g$ has a one-dimensional ideal which is contained in $Z(\g)$. By \textbf{Lemma 3.3} of \cite{Hum}, $Z(\g) \cap [\g,\g] \neq \{0\}$, and by \textbf{Lemma \ref{lema auxiliar 2}}, $Z(\g) \subset \mathfrak{J}$, and $Z(\g) \cap [\g,\g] \subset \ide$. In addition, $\ide$ and $\mathfrak{J}$ are abelians.
\smallskip

By \textbf{Prop. \ref{Prop ext abelianas 1}} the vector space $\g$ can be written as: $\g=\h \oplus \mathfrak{J}=\h \oplus \a \oplus \ide$, where $\mathfrak{J}=\a \oplus \ide$. Due to $\ide$ is isotropic, by the Witt-decomposition we consider $\h$ as an isotropic subspace of $\g$ such that $\a^{\perp}=\h \oplus \ide$. In addition, $B$ restricted to $\a \times \a$ is non-degenerate; we denote by $B_{\a}$ the restriction $B\mid_{\a \times \a}$. 
\smallskip

From \textbf{Prop. \ref{Prop ext abelianas 1}}, there are a Lie bracket $[\,\cdot\,,\,\cdot\,]_{\h}$ in $\h$, a representation $\rho:\h \to \gl(\ide)$, a 1-cocycle $\varphi$ in $C(\h;\Hom(\a;\ide))$ associated to $\bar{\rho}:\h \to \gl(\Hom(\a;\ide))$ and a 2-cocycle $(\lambda,\mu)$ in $C(\h;\a) \oplus C(\h;\ide)$ associated to the representation $R=(\varphi,\rho):\h \to \gl(\mathfrak{J})$, such that: 
$$
[x,y]=[x,y]_{\h}+\lambda(x,y)+\mu(x,y),\quad [x,u+\alpha]=\varphi(x)(u)+\rho(x)(\alpha),
$$
for all $x,y$ in $\h$, $u$ in $\a$ and $\alpha$ in $\ide(\g)$; then $(\g,[\,\cdot\,,\,\cdot\,])=\h(\lambda,\mu,\varphi,\rho)$. In addition, due to $B$ is non-degenerate and invariant, the map
\begin{equation}\label{definicion de phi}
\phi:\ide \rightarrow  \h^{\ast},\quad \alpha  \mapsto  B^{\flat}(\alpha)\mid_{\h}
\end{equation}
is an isomorphism of $\h$-modules, that is $\ad_{\h}^{\ast}(x) \circ \phi=\phi \circ \rho(x)$, for all $x$ in $\h$. Since $\g=\h \oplus \a \oplus \ide$, the invariant metric $B$ takes the form:
\begin{equation}\label{B1}
B(x+u+\alpha,y+v+\beta)=\phi(\alpha)(y)+\phi(\beta)(x)+B_{\a}(u,v),
\end{equation}
for all $x,y$ in $\h$, $u,v$ in $\a$ and $\alpha,\beta$ in $\ide$. Using that $B$ is invariant under $[\,\cdot\,,\,\cdot\,]$ we obtain: $B(\varphi(x)(u),y)\!=\! B([x,u],y)\!=\!-B(u,[x,y])\!=\!-B_{\a}(\lambda(x,y),u)$. Then 
\begin{equation}\label{relacion varphi y lambda}
\phi(\varphi(x)(u))(y)=-B_{\a}^{\flat}\left(\lambda(x,y)\right)(u),\,\text{ for all }x,y \in \h,\,\text{ and }u \in \a.
\end{equation}

Similarly, using that $B$ is invariant under $[\,\cdot\,,\,\cdot\,]$, for $x,y,z$ in $\h$ we get:
\begin{equation}\label{mu ciclicla}
\begin{array}{rl}
\phi(\mu(x,y))(z)&=B(\mu(x,y),z)=B([x,y],z)
\\
&=B(x,[y,z])
=B(x,\mu(y,z))=\phi(\mu(y,z))(x). 
\end{array}
\end{equation}

Let $\varphi^{\prime}:\h \to \Hom(\a;\h^{\ast})$ be the map defined by $\varphi^{\prime}(x)=\phi \circ \varphi(x)$, for all $x$ in $\h$. Thus, the map $x+u+\alpha \mapsto x+u+\phi(\alpha)$, is an isomorphism between $\h(\lambda,\mu,\varphi,\rho)$ and $\h(\lambda,\phi \circ \mu,\varphi^{\prime},\ad_{\h}^{\ast})$. Further, let $B^{\prime}$ be the symmetric bilinear form on $\h \oplus \a \oplus \h^{\ast}$ defined by:
\begin{equation}\label{B2}
B^{\prime}(x+u+\alpha^{\prime},y+v+\beta^{\prime})=\alpha^{\prime}(y)+\beta^{\prime}(x)+B_{\a}(u,v),
\end{equation}
for all $x,y$ in $\h$, $u,v$ in $\a$ and $\alpha^{\prime},\beta^{\prime}$ in $\h^{\ast}$. Then $B^{\prime}$ is an invariant metric on $\h(\lambda,\phi \circ \mu,\varphi^{\prime},\ad_{\h}^{\ast})$, making it into a quadratic Lie algebra isometric to $(\g,[\,\cdot\,,\,\cdot\,],B)$ (see \eqref{B1} and \eqref{B2}). If we make the assumptions: $\varphi^{\prime}=\varphi$, $\phi \circ \mu$, and $\rho=\ad_{\h}^{\ast}$, induced by the isomorphism $\phi:\ide \to \h^{\ast}$ (see \eqref{definicion de phi}), then by \eqref{relacion varphi y lambda} and \eqref{mu ciclicla}, $\varphi$ and $\mu$ satisfy:
\begin{align}
\label{relacion varphi y lambda dual}\varphi:\h \to \Hom(\a;\h^{\ast}), & \quad \varphi(x)(u)(y)=-B_{\a}(\lambda(x,y),u),\,\text{ and }\\
\label{mu ciclica dual} \mu:\h \times \h \to \h^{\ast}, & \quad \mu(x,y)(z)=\mu(y,z)(x),
\end{align}
for all $x,y,z$ in $\h$ and $u$ in $\a$.

\begin{Prop}\label{proposicion X}{\sl
Let $\h(\lambda,\mu,\varphi,\ad_{\h}^{\ast})$ be a Lie algebra in the sense of \textbf{Prop. \ref{Prop ext abelianas 1}}, in the vector space $\h \oplus \a \oplus \h^{\ast}$. Let $B_{\a}$ be a symmetric and non-degenerate bilinear form on $\a$. The symmetric and non-degenerate bilinear form $B$ on $\h \oplus \a \oplus \h^{\ast}$ defined by:
\begin{equation}\label{metrica X}
B(x+u+\alpha,y+v+\beta)=\alpha(x)+\beta(x)+B_{\a}(u,v)
\end{equation}
is invariant if and only if the conditions in \eqref{relacion varphi y lambda dual} and \eqref{mu ciclica dual} holds. In addition, any nilpotent non-abelian quadratic Lie algebra can be identified with this construction.
}
\end{Prop}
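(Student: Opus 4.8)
The plan is to treat the two assertions separately. For the equivalence I would exploit that both the bracket of $\h(\lambda,\mu,\varphi,\ad_\h^{\ast})$ and the form $B$ of \eqref{metrica X} are multilinear, so the invariance identity $B([X,Y],Z)=-B(Y,[X,Z])$ need only be checked when each of $X,Y,Z$ is homogeneous, i.e. lies in one of the three summands $\h$, $\a$, $\h^{\ast}$. This reduces the verification to a finite list of cases, which I would organise by the types of the three arguments. Before starting I would record the only places where the computation is not forced to vanish: $B$ pairs $\h$ with $\h^{\ast}$ and $\a$ with itself and annihilates every other pair of summands, while $[\h,\a]$ and $[\h,\h^{\ast}]$ land in $\h^{\ast}$ and $[\mathfrak{J},\mathfrak{J}]=0$ since $\a\oplus\h^{\ast}=\mathfrak{J}$ is abelian.

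With this bookkeeping most triples give $0=0$ on both sides, precisely because of the abelian structure of $\mathfrak{J}$ and the vanishing pairings. The triples with all three arguments in $\h$ collapse to the single equation $\mu(x,y)(z)=\mu(y,z)(x)$, which is \eqref{mu ciclica dual}; the triples with two arguments in $\h$ and one in $\a$ collapse to $\varphi(x)(u)(y)=-B_\a(\lambda(x,y),u)$, which is \eqref{relacion varphi y lambda dual}. The mixed triples involving $\h$ and $\h^{\ast}$ are satisfied automatically, and this is exactly where the hypothesis $\rho=\ad_\h^{\ast}$ enters: the defining identity $(\ad_\h^{\ast}(x)\alpha)(y)=-\alpha([x,y]_\h)$ forces $B([x,y],\alpha)+B(y,[x,\alpha])$ to vanish. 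Collecting these observations yields the equivalence in both directions at once, since the invariance of $B$ is equivalent to the conjunction of the two displayed conditions while every remaining case holds unconditionally (or follows from \eqref{relacion varphi y lambda dual} together with the skew-symmetry of $\lambda$).

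For the second assertion I would assemble the discussion preceding the statement. Given a nilpotent non-abelian quadratic Lie algebra $(\g,[\,\cdot\,,\,\cdot\,],B)$, Lemma \ref{lema auxiliar 2} furnishes the canonical abelian ideals $\ide=\ide(\g)\subset\mathfrak{J}=\mathfrak{J}(\g)$ with $[\g,\mathfrak{J}]\subset\ide$ and, because $B$ is an invariant metric, $\mathfrak{J}=\ide^{\perp}$; in particular $\ide$ is totally isotropic. A Witt decomposition then produces an isotropic complement $\h$ and a subspace $\a$ with $\g=\h\oplus\a\oplus\ide$, $\a^{\perp}=\h\oplus\ide$ and $B_\a=B|_{\a\times\a}$ non-degenerate, so that Prop. \ref{Prop ext abelianas 1} writes $\g=\h(\lambda,\mu,\varphi,\rho)$ with $B$ of the shape \eqref{B1}. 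Transporting along the $\h$-module isomorphism $\phi:\ide\to\h^{\ast}$ of \eqref{definicion de phi} replaces $\ide$ by $\h^{\ast}$ and $\rho$ by $\ad_\h^{\ast}$, carries $B$ to the form \eqref{B2}, and yields an isometry onto $\h(\lambda,\phi\circ\mu,\varphi',\ad_\h^{\ast})$; this is precisely the construction of the present proposition, and \eqref{relacion varphi y lambda dual}--\eqref{mu ciclica dual} are the relations \eqref{relacion varphi y lambda} and \eqref{mu ciclicla} read through $\phi$.

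I expect the bulk of the effort to be the bookkeeping of the case analysis rather than any single hard idea. The one step demanding genuine care -- which I regard as the crux -- is confirming that every triple mixing $\h$ and $\h^{\ast}$ cancels identically; this is the structural reason the choice $\rho=\ad_\h^{\ast}$ cannot be relaxed, and keeping the sign in $(\ad_\h^{\ast}(x)\alpha)(y)=-\alpha([x,y]_\h)$ correct is where an error would most easily slip in. The second assertion presents no real obstacle once Lemma \ref{lema auxiliar 2} and the isomorphism $\phi$ are in hand, as it merely records the normal form already derived in the text.
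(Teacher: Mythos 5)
Your proposal is correct and follows essentially the same route as the paper: the paper gives no separate proof of this proposition, which merely codifies the preceding discussion, where invariance of $B$ is unwound on homogeneous elements to yield \eqref{relacion varphi y lambda} and \eqref{mu ciclicla} (hence, after transport by $\phi$, the conditions \eqref{relacion varphi y lambda dual} and \eqref{mu ciclica dual}), and the identification of any nilpotent non-abelian quadratic Lie algebra with this construction is obtained exactly as you describe, via \textbf{Lemma \ref{lema auxiliar 2}}, the Witt decomposition with $\ide$ isotropic, and the $\h$-module isomorphism $\phi:\ide \to \h^{\ast}$. Your explicit case analysis (including the observation that the $\h$--$\h^{\ast}$ triples cancel precisely because $\rho=\ad_{\h}^{\ast}$, and your tacit correction of the typo $\alpha(x)$ for $\alpha(y)$ in \eqref{metrica X}) simply makes explicit what the paper leaves implicit.
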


If one are looking for conditions on $\h(\lambda,\mu,\varphi,\rho)$ to admit an invariant metric, then by the isomorphism $\phi:\ide\to \h^{\ast}$ in \eqref{definicion de phi}, we may assume that $\ide$ is equal to $\h^{\ast}$ and $\rho$ is equal to $\ad_{\h}^{\ast}$.  The condition \eqref{mu ciclica dual} imposes a restriction on $\mu$ that is easier to obtain than the one in \eqref{relacion varphi y lambda dual}. We fix a basis $\{x_1,\ldots,x_r\}$ of $\h$ and we consider its dual basis $\{\alpha_1,\ldots,\alpha_r\} \subset \h^{\ast}$, that is $\alpha_i(x_j)=\delta_{ij}$. Let $\eta_{ijk}$ be in $\F$ such that:
$$
\eta_{ijk}=\eta_{jki}=-\eta_{jik},\quad \text{ for all }1 \leq i,j,,k \leq r.
$$
Let $\mu:\h \times \h \to \h^{\ast}$ be the bilinear map defined by $\mu(x_j,x_k)=\sum_{i=1}^r\eta_{ijk}\alpha_i$, then $\mu$ is skew-symmetric and $\mu(x_j,x_k)(x_i)=\eta_{ijk}=\eta_{jki}=\mu(x_k,x_i)(x_j)$, which proves that $\mu$ satisfies \eqref{mu ciclica dual}. Therefore, we restrict to Lie algebras of the type $\h(\lambda,\mu,\varphi,\ad_{\h}^{\ast})$ where $\mu(x,y)(z)=\mu(y,z)(x)$ for all $x,y,z$ in $\h$. 
\smallskip

Let $B_{\a}:\a \times \a \to \F$ be a symmetric, non-degenerate and bilinear form on $\a$. The condition in \eqref{relacion varphi y lambda dual} is necessary if one wants to find an invariant metric on $\h(\lambda,\mu,\varphi,\ad_{\h}^{\ast})$. In the next result, we will show that we can associate to $\varphi$, a bilinear map $\lambda_{\varphi}:\h \times \h \to \a$, such that $\varphi(x)(u)(y)=-B_{\a}(\lambda_{\varphi}(x,y),u)$, regardless of whether $\h(\lambda,\mu,\varphi,\ad_{\h}^{\ast})$ admits an invariant metric.

\begin{Prop}\label{MR}{\sl
Let $\h(\lambda,\mu,\varphi,\ad_{\h}^{\ast})$ be a Lie algebra, in the sense of \textbf{Prop. \ref{Prop ext abelianas 1}}. Let $B_{\a}$ be a non-degenerate, symmetric and bilinear form on $\a$. Then, there exists a bilinear map $\lambda_{\varphi}:\h \times \h \to \a$, such that $\varphi(x)(u)(y)=-B_{\a}(\lambda_{\varphi}(x,y),u)$, for all $x,y$ in $\h$ and $u$ in $\a$.} 
\end{Prop}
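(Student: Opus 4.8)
The plan is to build $\lambda_{\varphi}$ directly out of $\varphi$ by using the non-degeneracy of $B_{\a}$ to represent a suitable family of linear functionals on $\a$ by vectors of $\a$; no use of the cocycle condition on $\varphi$ is needed, since the claim only asserts the existence of a bilinear map satisfying the stated identity. First I would fix $x,y$ in $\h$ and examine the assignment $u \mapsto \varphi(x)(u)(y)$. Because $\varphi(x)$ lies in $\Hom(\a;\h^{\ast})$, the element $\varphi(x)(u)$ depends linearly on $u$, and evaluation at the fixed vector $y$ is linear; hence $u \mapsto -\varphi(x)(u)(y)$ is a well-defined linear functional on $\a$, i.e.\ an element $\ell_{x,y}$ of $\a^{\ast}$.

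Next I would invoke the non-degeneracy of $B_{\a}$: the map $B_{\a}^{\flat}:\a \to \a^{\ast}$, $w \mapsto B_{\a}(w,\,\cdot\,)$, is a linear isomorphism precisely because $B_{\a}$ is non-degenerate. I would then set
$$
\lambda_{\varphi}(x,y)=(B_{\a}^{\flat})^{-1}(\ell_{x,y}),
$$
the unique vector of $\a$ with $B_{\a}(\lambda_{\varphi}(x,y),u)=\ell_{x,y}(u)=-\varphi(x)(u)(y)$ for all $u$ in $\a$. This is exactly the asserted relation $\varphi(x)(u)(y)=-B_{\a}(\lambda_{\varphi}(x,y),u)$.

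It then remains to verify that $(x,y)\mapsto\lambda_{\varphi}(x,y)$ is bilinear. Since $\varphi$ is linear and evaluation at a vector of $\h$ is linear, the scalar $\varphi(x)(u)(y)$ is linear separately in $x$, in $y$ and in $u$; fixing $u$, the functional-valued assignment $(x,y)\mapsto\ell_{x,y}$ is therefore bilinear from $\h\times\h$ into $\a^{\ast}$. Post-composing with the fixed linear isomorphism $(B_{\a}^{\flat})^{-1}$ preserves bilinearity, so $\lambda_{\varphi}$ is bilinear, as required.

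I do not anticipate a genuine obstacle: the result is a formal consequence of the non-degeneracy of $B_{\a}$, a Riesz-type representation of functionals by vectors. The only point deserving care is confirming that the representing vector depends bilinearly on the parameters $(x,y)$, which is immediate from the multilinearity of $\varphi(x)(u)(y)$ together with the linearity of $(B_{\a}^{\flat})^{-1}$.
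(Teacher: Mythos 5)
Your proposal is correct and is essentially the paper's own proof: the paper likewise defines the functional $T(x,y)(u)=-\varphi(x)(u)(y)$ in $\a^{\ast}$, inverts the isomorphism $B_{\a}^{\flat}$ to obtain $\lambda_{\varphi}(x,y)$, and notes bilinearity follows from the linearity of $\varphi$ and non-degeneracy of $B_{\a}$. Your explicit verification of bilinearity by post-composition with $(B_{\a}^{\flat})^{-1}$ is just a slightly more detailed rendering of the same argument.
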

\begin{proof}
Since $\varphi$ is a linear map between $\h$ and $\Hom(\a;\h^{\ast})$, for each pair $x,y$ in $\h$, let us consider the map $T(x,y):\a \to \F$, defined by:
$$
T(x,y)(u)=-\varphi(x)(u)(y),\quad \text{ for all }u \in \a.
$$
As $\varphi:\h \to \Hom(\a;\h^{\ast})$ is linear, then $T(x,y)$ is linear and belongs to $\a^{\ast}$. The map $B_{\a}^{\flat}:\a \to \a^{\ast}$ is bijective, for $B_{\a}$ is non-degenerate. Then there exists a unique element $\lambda_{\varphi}(x,y)$ in $\a$, such that $T(x,y)=B_{\a}^{\flat}(\lambda_{\varphi}(x,y))$, that is: $\varphi(x)(u)(y)=-T(x,y)(u)=-B_{\a}(\lambda_{\varphi}(x,y),u)$, for all $u$ in $\a$. Let $\lambda_{\varphi}:\h \times \h \to \a$ be the map defined by $(x,y) \mapsto \lambda_{\varphi}(x,y)$, which is bilinear because $\varphi$ is linear and $B_{\a}$ is non-degenerate. 
\end{proof}

If $\lambda_{\varphi}=\lambda$ and $\mu$ satisfies \eqref{mu ciclica dual}, by \textbf{Prop. \ref{proposicion X}}, $\h(\lambda,\mu,\varphi,\ad_{\h}^{\ast})$ admits an invariant metric. If $\lambda_{\varphi} \neq \lambda$ but $\lambda_{\varphi}-\lambda$ is a coboundary,
we can use {\bf Prop. \ref{criterio cohomologico}} to obtain 
the following criteria that determine sufficient conditions on a Lie algebra $\h(\lambda,\mu,\varphi,\ad_{\h}^{\ast}),$ to admit an invariant metric.

\begin{Prop}\label{nilp cuad}{\sl
Let $\h(\lambda,\mu,\varphi,\ad^{\ast}_{\h})$ be a Lie algebra in the sense of \textbf{Prop. \ref{Prop ext abelianas 1}} such that $\mu(x,y)(z)=\mu(y,z)(x)$ for all $x,y,z$ in $\h$. Let $\lambda_{\varphi}:\h \times \h \to \a$ be the map of \textbf{Prop. \ref{MR}} and $e_{\varphi}:C(\h;\a) \to C(\h;\h^{\ast})$ be the map of \eqref{definicion de e varphi}. If there exists a linear map $L\!:\!\h \!\to \!\a$ such that: 
$$
\textbf{(i)}\,\,\, e_{\varphi}(L)=0,\quad \text{ and }\quad \textbf{(ii)}\,\,\, \lambda_{\varphi}=\lambda+\d_{\a}(L).
$$
Then $\h(\lambda,\mu,\varphi,\ad_{\h}^{\ast})$ admits an invariant metric.}
\end{Prop}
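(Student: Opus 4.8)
The plan is to pass to a cohomologous $2$-cocycle for which the invariant-metric criterion of \textbf{Prop. \ref{proposicion X}} applies verbatim, and then to transport the resulting metric back along the isomorphism furnished by \textbf{Prop. \ref{criterio cohomologico}}. The guiding observation is that \textbf{Prop. \ref{MR}} already gives $\varphi(x)(u)(y)=-B_{\a}(\lambda_{\varphi}(x,y),u)$, which is exactly condition \eqref{relacion varphi y lambda dual} but with $\lambda_{\varphi}$ in the role of $\lambda$; so it suffices to replace $\lambda$ by $\lambda_{\varphi}$ without disturbing $\mu$ and without leaving the isomorphism class of the Lie algebra.

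First I would form the $1$-cochain $(L,0)$ in $C^1(\h;\a) \oplus C^1(\h;\h^{\ast})$ and compute its coboundary using \eqref{descomposicion de diferencial 3}, obtaining $\d_R(L,0)=(\d_{\a}(L),e_{\varphi}(L))$. Hypothesis \textbf{(i)} kills the second slot, so that $(\lambda,\mu)+\d_R(L,0)=(\lambda+\d_{\a}(L),\mu)$, and hypothesis \textbf{(ii)} identifies the first slot as $\lambda_{\varphi}$. Thus $(\lambda,\mu)$ and $(\lambda_{\varphi},\mu)$ differ by a coboundary; since $(\lambda,\mu)$ is a cocycle (it defines the given Lie algebra) so is $(\lambda_{\varphi},\mu)$, whence $\h(\lambda_{\varphi},\mu,\varphi,\ad_{\h}^{\ast})$ is a bona fide Lie algebra in the sense of \textbf{Prop. \ref{Prop ext abelianas 1}}.

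Next I would verify the two hypotheses of \textbf{Prop. \ref{proposicion X}} for $\h(\lambda_{\varphi},\mu,\varphi,\ad_{\h}^{\ast})$: condition \eqref{relacion varphi y lambda dual} holds because, by \textbf{Prop. \ref{MR}}, $\varphi(x)(u)(y)=-B_{\a}(\lambda_{\varphi}(x,y),u)$; and condition \eqref{mu ciclica dual} holds because $\mu$ is carried over unchanged and satisfies $\mu(x,y)(z)=\mu(y,z)(x)$ by assumption. Hence the bilinear form $B$ of \eqref{metrica X} is an invariant metric on $\h(\lambda_{\varphi},\mu,\varphi,\ad_{\h}^{\ast})$. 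Finally, \textbf{Prop. \ref{criterio cohomologico}} supplies a Lie algebra isomorphism $\Psi:\h(\lambda,\mu,\varphi,\ad_{\h}^{\ast}) \to \h(\lambda_{\varphi},\mu,\varphi,\ad_{\h}^{\ast})$ (explicitly $\Psi(x)=x-L(x)$ on $\h$ and the identity on $\a \oplus \h^{\ast}$); pulling $B$ back through $\Psi$, i.e. setting $\tilde B(X,Y)=B(\Psi(X),\Psi(Y))$, produces a symmetric, non-degenerate, invariant bilinear form on $\h(\lambda,\mu,\varphi,\ad_{\h}^{\ast})$, which is the desired invariant metric.

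The only genuinely delicate point is the bookkeeping with the differential \eqref{descomposicion de diferencial 3}: one must see that the coboundary of $(L,0)$ feeds $e_{\varphi}(L)$ — and nothing from the $\d_{\rho}$ term — into the $\ide$-component, which is precisely why condition \textbf{(i)} is needed to leave $\mu$ (and hence \eqref{mu ciclica dual}) intact, while condition \textbf{(ii)} repairs $\lambda$ into $\lambda_{\varphi}$. Matching the sign convention of \eqref{relacion varphi y lambda dual} against \textbf{Prop. \ref{MR}} is the other spot requiring care; beyond these, the argument is an assembly of the earlier propositions.
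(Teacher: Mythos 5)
Your proposal is correct and follows essentially the same route as the paper's proof: both replace $(\lambda,\mu)$ by the cohomologous cocycle $(\lambda_{\varphi},\mu)=(\lambda,\mu)+\d_R(L,0)$ (hypothesis \textbf{(i)} ensuring the $\h^{\ast}$-component of the coboundary vanishes), apply \textbf{Prop.~\ref{proposicion X}} together with \textbf{Prop.~\ref{MR}} to get the invariant metric \eqref{metrica X} on $\h(\lambda_{\varphi},\mu,\varphi,\ad_{\h}^{\ast})$, and transport it back through the isomorphism of \textbf{Prop.~\ref{criterio cohomologico}}. Your explicit pullback $\tilde B(X,Y)=B(\Psi(X),\Psi(Y))$ only spells out what the paper leaves implicit in its final sentence.
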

\begin{proof}
Since $\d_R(\lambda,\mu)=0$, $e_{\varphi} \circ \d_{\a}=-\d_{\rho}\circ e_{\varphi}$ and $e_{\varphi}(L)=0$, then $\d_R(\lambda+\d_{\a}(L),\mu)=\d_R(\lambda_{\varphi},\mu)=0$, and by \textbf{Prop. \ref{Prop ext abelianas 1}}, $\h(\lambda_{\varphi},\mu,\varphi,\ad_{\h}^{\ast})$ is a Lie algebra in $\h \oplus \a \oplus \h^{\ast}$. Due to $(\lambda+\d(L),\mu)=(\lambda,\mu)+\d_R(L,0)$, by {\bf Prop. \ref{criterio cohomologico}} the Lie algebra $\h(\lambda,\mu,\varphi,\ad^{*}_{\h})$ is isomorphic to $\h(\lambda_{\varphi}),\mu,\varphi,\ad^{*}_{\h})$. Let $B$ be the symmetric and non-degenerate bilinear form on $\h \oplus \a \oplus \h^{*}$ defined in \eqref{metrica X}. As $\varphi(x)(u)(y)=-B_{\a}(\lambda_{\varphi}(x,y),u)$, then \eqref{relacion varphi y lambda dual}) holds true. Due to $\mu$ satisfies \eqref{mu ciclica dual}, by \textbf{Prop. \ref{proposicion X}}, the bilinear form $B$ in \eqref{metrica X} is an invariant metric on $\h(\lambda+\d_{\a}(L),\mu,\varphi,\ad^{*}_{\h})$, and therefore $\h(\lambda,\mu,\varphi,\ad^{*}_{\h})$ admits an invariant metric.
\end{proof}

\begin{Prop}\label{nilp cuad 2}{\sl
Let $\h(\lambda,\mu,\varphi,\ad^{\ast}_{\h})$ be a Lie algebra in the sense of \textbf{Prop. \ref{Prop ext abelianas 1}} such that $\mu(x,y)(z)=\mu(y,z)(x)$ for all $x,y,z$ in $\h$. Let $\lambda_{\varphi}:\h \times \h \to \a$ be the map of \textbf{Prop. \ref{MR}} and $B_{\a}$ be a symmetric and non-degenerate bilinear form on $\a$. If there exists a linear map $L:\h \to \a$ such that:
$$
\textbf{(i)}\,\,\lambda_{\varphi}=\lambda+\d_{\a}(L),\,\,\,\,\text{ and }\,\,\,\textbf{(ii)}\,\,B_{\a}({\lambda_{\varphi}}(y,z),L(x))=B_{\a}(\lambda_{\varphi}(x,y),L(z)),
$$
for all $x,y,z$ in $\h$. Then $\h(\lambda,\mu,\varphi,\ad^{\ast}_{\h})$ admits an invariant metric.}
\end{Prop}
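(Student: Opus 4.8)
The plan is to reprise the proof of \textbf{Prop. \ref{nilp cuad}}: I would replace the $2$-cocycle $(\lambda,\mu)$ by a cohomologous one whose first component is $\lambda_{\varphi}$, obtain an isomorphic copy of $\h(\lambda,\mu,\varphi,\ad^{\ast}_{\h})$, and then recognize the form \eqref{metrica X} as an invariant metric via \textbf{Prop. \ref{proposicion X}}. The essential difference with \textbf{Prop. \ref{nilp cuad}} is that here $e_{\varphi}(L)$ need not vanish, so the second component of the cocycle is genuinely modified; the heart of the argument will therefore be to show that this new second component still satisfies \eqref{mu ciclica dual}, and this is exactly where hypothesis \textbf{(ii)} enters.

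First I would form the coboundary $\d_R(L,0)=(\d_{\a}(L),e_{\varphi}(L))$ from \eqref{descomposicion de diferencial 3}. By hypothesis \textbf{(i)} we have $(\lambda,\mu)+\d_R(L,0)=(\lambda_{\varphi},\mu^{\prime})$ with $\mu^{\prime}=\mu+e_{\varphi}(L)$. Since $\d_R(\lambda,\mu)=0$ and $\d_R^{2}=0$, the pair $(\lambda_{\varphi},\mu^{\prime})$ is again a $2$-cocycle, so \textbf{Prop. \ref{Prop ext abelianas 1}} yields a Lie algebra $\h(\lambda_{\varphi},\mu^{\prime},\varphi,\ad^{\ast}_{\h})$, and because $(\lambda_{\varphi},\mu^{\prime})$ is cohomologous to $(\lambda,\mu)$, \textbf{Prop. \ref{criterio cohomologico}} provides an isomorphism between $\h(\lambda,\mu,\varphi,\ad^{\ast}_{\h})$ and $\h(\lambda_{\varphi},\mu^{\prime},\varphi,\ad^{\ast}_{\h})$.

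The main step is to check that $\mu^{\prime}$ satisfies \eqref{mu ciclica dual}. As $\mu$ does so by hypothesis, it suffices to verify it for $e_{\varphi}(L)$. Unwinding \eqref{definicion de e varphi} gives $e_{\varphi}(L)(x,y)=\varphi(x)(L(y))-\varphi(y)(L(x))$; evaluating at $z$ and using $\varphi(x)(u)(y)=-B_{\a}(\lambda_{\varphi}(x,y),u)$ from \textbf{Prop. \ref{MR}} I obtain
$$
e_{\varphi}(L)(x,y)(z)=-B_{\a}(\lambda_{\varphi}(x,z),L(y))+B_{\a}(\lambda_{\varphi}(y,z),L(x)).
$$
Now $\lambda_{\varphi}=\lambda+\d_{\a}(L)$ is skew-symmetric (both $\lambda$ and $\d_{\a}(L)(x,y)=-L([x,y]_{\h})$ are), so rewriting $\lambda_{\varphi}(x,z)=-\lambda_{\varphi}(z,x)$ and applying hypothesis \textbf{(ii)} to each of the two terms, I expect both to reduce to $B_{\a}(\lambda_{\varphi}(x,y),L(z))$, whence $e_{\varphi}(L)(x,y)(z)=2\,B_{\a}(\lambda_{\varphi}(x,y),L(z))$. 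Since this expression is skew-symmetric in $(x,y)$ it lies in $C^{2}(\h;\h^{\ast})$, and hypothesis \textbf{(ii)} gives $e_{\varphi}(L)(x,y)(z)=e_{\varphi}(L)(y,z)(x)$, which is precisely \eqref{mu ciclica dual}.

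It then remains to conclude: on $\h(\lambda_{\varphi},\mu^{\prime},\varphi,\ad^{\ast}_{\h})$ the identity $\varphi(x)(u)(y)=-B_{\a}(\lambda_{\varphi}(x,y),u)$ is exactly \eqref{relacion varphi y lambda dual} and $\mu^{\prime}$ satisfies \eqref{mu ciclica dual}, so \textbf{Prop. \ref{proposicion X}} shows that the form $B$ of \eqref{metrica X} is an invariant metric on this Lie algebra. Pulling $B$ back along the isomorphism of \textbf{Prop. \ref{criterio cohomologico}} then produces an invariant metric on $\h(\lambda,\mu,\varphi,\ad^{\ast}_{\h})$. The only delicate point is the collapse of the two terms in the third paragraph; everything else is routine once \textbf{Prop. \ref{nilp cuad}} is in hand.
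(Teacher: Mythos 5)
Your proposal is correct and follows essentially the same route as the paper's proof: pass to the cohomologous cocycle $(\lambda_{\varphi},\mu+e_{\varphi}(L))=(\lambda,\mu)+\d_R(L,0)$, verify the cyclic condition \eqref{mu ciclica dual} for $\mu^{\prime}=\mu+e_{\varphi}(L)$ using \textbf{Prop. \ref{MR}}, skew-symmetry of $\lambda_{\varphi}$ and hypothesis \textbf{(ii)}, then conclude via \textbf{Prop. \ref{proposicion X}} and \textbf{Prop. \ref{criterio cohomologico}}. The only cosmetic difference is that you collapse both terms to obtain the closed formula $e_{\varphi}(L)(x,y)(z)=2B_{\a}(\lambda_{\varphi}(x,y),L(z))$, while the paper merely matches the two expansions and observes their difference is exactly hypothesis \textbf{(ii)}; both computations are valid.
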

\begin{proof}
Let $\mu^{\prime}=\mu+e_{\varphi}(L)$. We claim that $\mu^{\prime}(x,y)(z)=\mu^{\prime}(y,z)(x)$, for all $x,y,z$ in $\h$. Since $\mu$ satisfies the cyclic condition \eqref{mu ciclica dual}, then $\mu^{\prime}(x,y)(z)=\mu^{\prime}(y,z)(x)$ if and only if $e_{\varphi}(L)(x,y)(z)=e_{\varphi}(L)(y,z)(x)$. By definition of $e_{\varphi}$ (see \eqref{definicion de e varphi}), we have:
\begin{equation}\label{nq-1}
\begin{split}
& e_{\varphi}(L)(x,y)(z)=\varphi(x)(L(y))(z)-\varphi(y)(L(x))(z),\,\text{ and }\\
& e_{\varphi}(L)(y,z)(x)=\varphi(y)(L(z))(x)-\varphi(z)(L(y))(x).
\end{split}
\end{equation}
By \textbf{Prop. \ref{MR}}, the equations in \eqref{nq-1} can be written as:
\begin{equation}\label{nq-2}
\begin{split}
& e_{\varphi}(L)(x,y)(z)=-B_{\a}(\lambda_{\varphi}(x,z),L(y))+B_{\a}(\lambda_{\varphi}(y,z),L(x)),\,\text{ and }\\
& e_{\varphi}(L)(y,z)(x)=-B_{\a}(\lambda_{\varphi}(y,x),L(z))+B_{\a}(\lambda_{\varphi}(z,x),L(y)).
\end{split}
\end{equation}
Since $\lambda_{\varphi}=\lambda+\d_{\a}(L)$, then $\lambda_{\varphi}:\h \times \h \to \a$ is skew-symmetric. From \eqref{nq-2} we deduce that $e_{\varphi}(L)(x,y)(z)=e_{\varphi}(L)(y,z)(x)$ follows from $B_{\a}(\lambda_{\varphi}(y,z),L(x))=B_{\a}(\lambda_{\varphi}(x,y),L(z))$, which is the hypothesis in \textbf{(ii)}. Then $\mu^{\prime}(x,y)(z)=\mu^{\prime}(y,z)(x)$, for all $x,y,z$ in $\h$. 
Observe that: 
\begin{equation}\label{nq-0}
(\lambda_{\varphi},\mu^{\prime})=(\lambda+\d_{\a}(L),\mu+e_{\varphi}(L))=(\lambda,\mu)+\d_R(L,0),
\end{equation}
then $\d_R(\lambda_{\varphi},\mu^{\prime})=0$. Thus, the Lie algebra $\h(\lambda_{\varphi},\mu^{\prime},\varphi,\ad_{\h}^{\ast})$ satisfies:
$$
\varphi(x)(u)(y)=-B_{\a}(\lambda_{\varphi}(x,y),u),\quad \text{ and }\quad \mu^{\prime}(x,y)(z)=\mu^{\prime}(y,z)(x),
$$
The symmetric and non-degenerate bilinear form $B$ defined by \eqref{metrica X}, is an invariant metric on $\h(\lambda_{\varphi},\mu^{\prime},\varphi,\ad_{\h}^{\ast})$. By \textbf{Prop. \ref{criterio cohomologico}} and \eqref{nq-0}, $\h(\lambda_{\varphi},\mu^{\prime},\varphi,\ad_{\h}^{\ast})$ and $\h(\lambda,\mu,\varphi,\ad_{\h}^{\ast})$, are isomorphic, thus $\h(\lambda,\mu,\varphi,\ad_{\h}^{\ast})$ admits an invariant metric. 
\end{proof}

\subsection{Example}
Let $\h=\Span_{\F}\{x_1,x_2,x_3\}$ be the 3-dimensional Heisenberg Lie algebra, with $[x_1,x_2]_{\h}=x_3$. Let $\h^{\ast}=\Span_{\F}\{\alpha_1,\alpha_2,\alpha_3\}$, where $\alpha_i(x_j)=\delta_{ij}$. Let $\a=\Span_{\F}\{a_1,a_2,a_3\}$ with the non-degenerate, symmetric and bilinear form $B_{\a}(u_i,u_j)=\delta_{ij}$. %Let $\varphi:\h \to \Hom(\a;\h^{\ast})$ be the linear map defined by $\varphi(x_1)(u_1)=\alpha_2$, $\varphi(x_2)(u_1)=-\alpha_1$ and $\varphi(x_3)=0$. Then $\varphi([x,y]_{\h})=\ad_{\h}^{\ast}(x)\circ \varphi(y)-\ad_{\h}^{\ast}(y)\circ \varphi(y)$, for all $x,y$ in $\h$, which means that $\varphi$ is a 1-cocycle in the complex $C(\h;\Hom(\a;\h^{\ast}))$, associated to the representation $\overline{\ad_{\h}^{\ast}}:\h \to \gl(\Hom(\a;\h^{\ast}))$. Observe that $\d_{\a}(\lambda)=0$ for any $\lambda$ in $C^2(\h;\a)$, that is:
We shall use the criterion in \textbf{Prop. \ref{nilp cuad 2}} in a Lie algebra $\h(\lambda,\mu,\varphi,\ad_{\h}^{\ast})$, where $\varphi:\h \to \Hom(\a;\h^{\ast})$ is given by:
$$
\aligned
& \varphi(x_1)(u_2)=-\varphi(x_2)(u_1)=\alpha_3,\quad \varphi(x_2)(u_3)=-\varphi(x_3)(u_2)=\alpha_1,\\
& \varphi(x_3)(u_1)=-\varphi(x_1)(u_3)=\alpha_2.
\endaligned
$$
The skew-symmetric bilinear map $\lambda:\h \times \h \to \a$ is given by:
$$
\lambda(x_1,x_2)=(1+\xi)u_3,\quad \lambda(x_2,x_3)=u_1,\quad \lambda(x_3,x_1)=u_2,
$$
where $\xi$ is in $\F$. The skew-symmetric bilinear map $\mu:\h\times \h \to \h^{\ast}$ is given by: 
$$
\mu(x_1,x_2)\!=\!-2\xi\, \alpha_3,\quad \mu(x_2,x_3)\!=\!-2\xi \,\alpha_1,\quad \mu(x_3,x_1)\!=\!-2\xi \,\alpha_2.
$$
Since $\varphi(x_j)(u_j)=0$ for all $1 \leq j \leq 3$, then 
$$
\aligned
& e_{\varphi}(\lambda)(x_1,x_2,x_3)=\\
& \varphi(x_1)(\lambda(x_2,x_3))+\varphi(x_2)(\lambda(x_3,x_1))+\varphi(x_3)(\lambda(x_1,x_2))=0.
\endaligned
$$
Since $\ad_{\h}^{\ast}(x_j)(\alpha_j)=0$ for all $j$ and $\mu$ is skew-symmetric, we have:
$$
\aligned
& \d_{\ad^{\ast}_{\h}}(\mu)(x_1,x_2,x_3)=\ad_{\h}^{\ast}(x_1)(\mu(x_2,x_3))\\
&+\ad_{\h}^{\ast}(x_2)(\mu(x_3,x_1))+\ad_{\h}^{\ast}(x_3)(\mu(x_1,x_2))+\mu(x_1,[x_2,x_3]_{\h})\\
&+\mu(x_2,[x_3,x_1]_{\h})+\mu(x_3,[x_1,x_2]_{\h})=0.
\endaligned
$$
Then $\d_R(\lambda,\mu)=(\d_{\a}(\lambda),e_{\varphi}(\lambda)+\d_{\ad_{\h}^{\ast}}(\mu))=0$, and by \textbf{Prop. \ref{Prop ext abelianas 1}}, $\h(\lambda,\mu,\varphi,\ad_{\h}^{\ast})$ is a Lie algebra. The bilinear $\lambda_{\varphi}:\h \times \h \to \a$ of \textbf{Prop. \ref{MR}} is given by:
$$
\lambda_{\varphi}(x_1,x_2)=u_3,\quad \lambda_{\varphi}(x_2,x_3)=u_1,\quad \lambda_{\varphi}(x_3,x_1)=u_2.
$$
Let $L:\h \to \a$ be the linear map defined by $L(x_j)=\xi\,u_j$ for all $j$. It is straightforward to verify that:
$$
\aligned
& B_{\a}(\lambda_{\varphi}(x_1,x_2),L(x_3))=B_{\a}(\lambda_{\varphi}(x_2,x_3),L(x_1))\\
&=B_{\a}(\lambda_{\varphi}(x_3,x_1),L(x_2))=\xi.
\endaligned
$$
In addition:
$$
\aligned
& \d_{\a}(L)(x_1,x_2)=-L([x_1,x_2]_{\h})=-L(x_3)=-\xi u_3,\quad \text{ and }\\
& \d_{\a}(L)(x_2,x_3)=-L([x_2,x_3]_{\h})=0=-L([x_3,x_1]_{\h})=\d_{\a}(L)(x_3,x_1).
\endaligned
$$
Then, $\lambda_{\varphi}=\lambda+\d_{\a}(L)$. By \textbf{Prop. \ref{nilp cuad 2}}, $\h(\lambda,\mu,\varphi,\ad_{\h}^{\ast})$ admits an invariant metric. The bracket on $\h(\lambda,\mu,\varphi,\ad_{\h}^{\ast})$ is given by:
$$
\aligned
& [x_1,x_2]=x_3+(1+\xi)u_3-2\xi \alpha_3,\quad [x_2,x_3]=u_1-2\xi \alpha_1,\\
& [x_3,x_1]\!=\!u_2-2\xi \alpha_2,\quad [x_1,u_2]\!=\!-[x_2,u_1]\!=\!\alpha_3,\quad [x_2,u_3]\!=\!-[x_3,u_2]\!=\!\alpha_1,\\
& [x_3,u_1]=-[x_1,u_3]=\alpha_2,\quad [x_1,\alpha_3]=-\alpha_2,\quad [x_2,\alpha_3]=\alpha_1.
\endaligned
$$
The invariant metric $B$ on $\h(\lambda,\mu,\varphi,\ad_{\h}^{\ast})$ is: $B(x_j,u_k)=-\xi \delta_{jk}$, and $B(u_j,u_k)=B(x_j,\alpha_k)=\delta_{jk}$, for all $j,k$. 
\smallskip

It is a straightforward to verify that:
$$
\aligned
& Z(\g)=\Span_{\F}\{\alpha_1,\alpha_2\},\quad Z_2(\g)=\h^{\ast},\quad Z_3(\g)=\a \oplus \h^{\ast},\\
& Z_4(\g)=\F x_3 \oplus Z_3(\g),\quad Z_5(\g)=\g,\quad \g^1=Z_4(\g),\\
& \g^2=\a \oplus \h^{\ast},\quad \g^3=\h^{\ast},\quad \g^4=Z(\g),\quad \g^5=\{0\}
\endaligned
$$
Thus, $\ide(\g)=\h^{\ast}$ and $\j(\g)=\a \oplus \h^{\ast}$.

\section{Current nilpotent Lie algebras}

Let $\g=\h(\lambda,\mu,\varphi,\ad_{\h}^{\ast})$ be a non-abelian nilpotent quadratic Lie algebra with invariant metric $B$. By \textbf{Lemma \ref{lema auxiliar 2}}, $\mathfrak{J}(\g)=\a \oplus \h^{\ast}$ and $\ide(\g)=\h^{\ast}$. Let $\Sa$ be a finite dimensional associative and commutative algebra with unit $1$ over $\F$. In the vector space $\g \otimes \Sa$, we consider the bracket $[X \otimes s,Y \otimes t]_{\g \otimes \Sa}=[X,Y] \otimes st$, for all $X,Y$ in $\g$ and $s,t$ in $\Sa$. Then $(\g \otimes \Sa,[\,\cdot\,,\,\cdot\,]_{\g \otimes \Sa})$ is a Lie algebra which is called the \textbf{current Lie algebra of $\g$ by $\Sa$}.
\smallskip

Since $\g=\h \oplus \mathfrak{J}(\g)$, then $\g \otimes \Sa=(\h \otimes \Sa) \oplus (\mathfrak{J}(\g)\otimes \Sa)$. Thus, by \eqref{corchete de ext abeliana}, the bracket $[\,\cdot\,,\,\cdot\,]_{\g \otimes \Sa}$ in $\g \otimes \Sa$ takes the form:
\begin{equation}\label{corchete en current}
\begin{split}
& [x \otimes s,y \otimes t]_{\g \otimes \Sa}=[x,y]_{\h} \otimes st+\Lambda(x,y) \otimes st,\\
& [x \otimes s,U \otimes t]_{\g \otimes \Sa}=R(x)(U) \otimes st.
\end{split}
\end{equation}
where $x,y$ are in $\h$, $U$ is in $\mathfrak{J}(\g)$, and $s,t$ are in $\Sa$. From now on we write $[\,\cdot\,,\,\cdot\,]$ to denote the bracket $[\,\cdot\,,\,\cdot\,]_{\g \otimes \Sa}$ in $\g \otimes \Sa$.
\smallskip

Let $\mathcal{W}=\{f:\g \times \g \to \F\mid f \text{ is bilinear and invariant }\}$. Suppose that $\g \otimes \Sa$ admits an invariant metric $\bar{B}$. For each pair $s,t$ in $\Sa$, consider the map $\mathcal{L}(s,t):\g \times \g \to \F$ defined by:
$$
\mathcal{L}(s,t)(X,Y)=\bar{B}(X \otimes s,Y \otimes t),\quad \text{ for all } X,Y \in \g.
$$
We assert that $\mathcal{L}(s,t)$ belongs to $\mathcal{W}$. Indeed, let $X,Y,Z$ be in $\g$, using that $\bar{B}$ is invariant we get:
\begin{equation}\label{new version 1}
\begin{split}
& \mathcal{L}(s,t)([X,Y],Z)=\bar{B}([X,Y] \otimes s,Z \otimes t)\\
&=\bar{B}([X \otimes s,Y \otimes 1],Z \otimes t)=\bar{B}(X \otimes s,[Y\otimes 1,Z \otimes t])\\
&=\bar{B}(X \otimes s,[Y,Z] \otimes t)=\mathcal{L}(s,t)(X,[Y,Z]).
\end{split}
\end{equation}
Then $\mathcal{L}(s,t)$ belongs to $\mathcal{W}$. Following the arguments in \eqref{new version 1}, we obtain:
\begin{equation}\label{new version 2}
\begin{split}
& \mathcal{L}(s,t)([X,Y],Z)\!=\!\bar{B}([X \otimes 1,Y \otimes s],Z \otimes t)\\
&=\!\bar{B}(X \otimes 1,[Y,Z]\otimes st)=\mathcal{L}(1,st)(X,[Y,Z])
\\&=\!\mathcal{L}(1,st)([X,Y],Z).
\end{split}
\end{equation}
Using the same arguments as in \eqref{new version 2} and that $\bar{B}$ is symmetric, we get:
\begin{equation}\label{new version 3}.
\begin{split}
& \mathcal{L}(s,t)([X,Y],Z)=\bar{B}(z \otimes t,[X \otimes s,Y \otimes 1])\\
&=\bar{B}([Z,X]\otimes st,Y \otimes s)=-\bar{B}([Z \otimes s,X \otimes t],Y \otimes 1)\\
&=\bar{B}(Z \otimes s,[X,Y]\otimes t)=\mathcal{L}(s,t)(Z,[X,Y]).
\end{split}
\end{equation}
Due to $\Sa$ is commutative, from \eqref{new version 2} and \eqref{new version 3} we deduce that $\mathcal{L}(\Sa,\Sa)\vert_{[\g,\g]\times \g}$ is symmetric and invariant, that is:
\begin{equation}\label{new version 4}
\mathcal{L}(s,t)(X,Y)=\mathcal{L}(t,s)(X,Y)=\mathcal{L}(st,1)(X,Y)=\mathcal{L}(s,t)(Y,X),
\end{equation}
for all $X$ in $[\g,\g]$ and $Y$ in $\g$. 
\smallskip

Let $\operatorname{Cent}(\g)=\{T\in \mathfrak{gl}(\g)\mid T([X,Y])=[X,T(Y)]\text{ for all }X,Y \in \g\}$. Since $B$ is a non-degenerate bilinear form on $\g$, for each pair $s,t$ in $\Sa$, there exists $\Gamma(s,t)$ in $\operatorname{Cent}(\g)$ such that:
\begin{equation}\label{new version 5}
B\left(\Gamma(s,t)(X),Y\right)=\mathcal{L}(s,t)(X,Y)=\bar{B}(X \otimes s,Y \otimes t),
\end{equation}
where $X,Y$ are in $\g$. As $\mathcal{L}(\Sa,\Sa)\vert_{[\g,\g]\times g}$ is symmetric and invariant, then
$$
\aligned
\mathcal{L}(s,t)([X,Y],Z)&=B(\Gamma(s,t)([X,Y]),Z)=\mathcal{L}(st,1)([X,Y],Z)\\
&=B(\Gamma(st,1)([X,Y],Z)),\,\,\text{ for all }X,Y,Z \in \g.
\endaligned
$$
Thus $\Gamma(s,t)([X,Y])=\Gamma(st,1)([X,Y])$ and $\Gamma(\Sa,\Sa)\vert_{[\g,\g]}$ is symmetric and invariant, that is:
\begin{equation}\label{restriccion centroides}
\Gamma(s,t)\vert_{[\g,\g]}=\Gamma(t,s)\vert_{[\g,\g])}=\Gamma(st,1)\vert_{[\g,\g]},\,\text{ for all }s,t \in \Sa.
\end{equation}
It is not difficult to verify that the canonical ideal $\mathfrak{J}(\g)=\a \oplus \h^{\ast}$, is invariant under $\Gamma(s,t)$. Thus, for every $x$ in $\h$ and $U$ in $\mathfrak{J}(\g)$, there are $\epsilon(s,t,x)$ in $\h$, $\Theta(s,t,x)$ in $\mathfrak{J}(\g)$ and $\vartheta(s,t,U)$ in $\mathfrak{J}(\g)$ such that: 
$$
\Gamma(s,t)(x)=\epsilon(s,t,x)+\Theta(s,t,x),\quad \text{ and } \quad \Gamma(s,t)(U)=\vartheta(s,t,U).
$$
Let $\epsilon:\Sa \times \Sa \to \gl(\h)$ be the map defined by $\epsilon(s,t)(x)=\epsilon(s,t,x)$. Similarly, we define $\Theta:\Sa \times \Sa \to \Hom(\h,\mathfrak{J}(\g))$ by $\Theta(s,t)(x)=\Theta(s,t,x)$ and $\vartheta:\Sa \times \Sa \to \gl(\mathfrak{J}(\g))$ by $\vartheta(s,t)(U)=\vartheta(s,t,U)$.
\smallskip

Since $\Gamma(s,t)([x,y])=[x,\Gamma(s,t)(y)]$ for all $x,y$ in $\h$, by \eqref{corchete de ext abeliana} we deduce: $\epsilon(s,t)([x,y]_{\h})\!=\![x,\epsilon(s,t)(y)]_{\h}$. That is, $\epsilon(s,t)$ belongs to $\operatorname{Cent}(\h)$.
\smallskip

From \eqref{restriccion centroides} we get: 
$$
[\Gamma(s,t)(X),Y]=\Gamma(s,t)([X,Y])=\Gamma(st,1)([X,Y])=[\Gamma(st,1)(X),Y]
$$
for all $X,Y$ in $\g$, then $\Gamma(s,t)(\g)-\Gamma(st,1)(\g)\subset Z(\g)$. This implies that $\Gamma(s,t)(x)-\Gamma(st,1)(x)$ belongs to $Z(\g)\subset \mathfrak{J}(\g)$ for all $x$ in $\h$, that is:
$$
(\epsilon(s,t)(x)-\epsilon(st,1)(x))+(\Theta(s,t)(x)-\Theta(st,1)(x)) \in Z(\g)\subset \mathfrak{J}(\g)
$$
As $\Theta(s,t)(x)-\Theta(st,1)(x)$ belongs to $\mathfrak{J}(\g)$, then $\epsilon(s,t)(x)-\epsilon(st,1)(x)$ belongs to $\h\cap \mathfrak{J}(\g)=\{0\}$. Hence, $\epsilon(s,t)=\epsilon(st,1)$.
\smallskip

As $[x,U]=R(x)(U)$ and $\Gamma(s,t)([x,U])=[\Gamma(s,t)(x),U]=[x,\Gamma(s,t)(U)]$ for all $x$ in $\h$, and $U$ in $\mathfrak{J}(\g)$, by \eqref{corchete de ext abeliana} we obtain:
\begin{equation}\label{new version 7}
\vartheta(s,t)(R(x)(U))=R(\epsilon(s,t)(x))(U)=R(x)(\vartheta(s,t)(U)).
\end{equation}
Suppose there exists $s^{\prime}$ in $\Sa$ such that $\epsilon(s^{\prime},t)=0$ for all $t$ in $\Sa$. 
By \eqref{new version 7} it follows that $\Gamma(s^{\prime},t)(R(x)(U))=\vartheta(s^{\prime},t)(R(x)(U))=0$ for all $x$ in $\h$, $U$ in $\mathfrak{J}(\g)$ and $t$ in $\Sa$. By \eqref{new version 5} this amounts to say that:
\begin{equation}\label{new version 8}
\bar{B}(R(x)(u)\otimes s^{\prime},Y \otimes t)=B(\Gamma(s^{\prime},t)(R(x)(u)),Y)=0.
\end{equation}
where $Y$ is in $\g$. Due to $\bar{B}$ is non-degenerate and $Y$ and $t$ are arbitrary, from \eqref{new version 8} we deduce that $R(x)(U)\otimes s^{\prime}=0$. If $s^{\prime}$ is non-zero then $R(x)(U)=[x,U]=0$. Since $x$ and $U$ are arbitrary, it follows $[\h,\mathfrak{J}]=\{0\}$. As $\h^{\ast}=\ide(\g)\subset \mathfrak{J}(\g)$, then $[\h,\h^{\ast}]=\ad_{\h}^{\ast}(\h)(\h^{\ast})=\{0\}$, which implies that $\h$ is abelian. Therefore, if $\h$ is non-abelian, $s^{\prime}=0$.

\begin{Theorem}\label{teorema current}{\sl
Let $\g=\h(\lambda,\mu,\varphi,\ad_{\h}^{\ast})$ be a non-abelian nilpotent quadratic Lie algebra. Let $\Sa$ be a commutative and associative algebra with unit. If the current Lie algebra $\g \otimes \Sa$ admits an invariant metric and $\h$ is non-abelian, then there exists a bilinear map $\epsilon:\Sa \times \Sa \to \operatorname{Cent}(\h)$ such that:
\smallskip

\textbf{(i)} $\epsilon(s,t)=\epsilon(s\,t,1)$, for all $s,t$ in $\Sa$.
\smallskip

\textbf{(ii)} If $\epsilon(s,t)=0$ for all $t$ in $\Sa$ then $s=0$.
}
\end{Theorem}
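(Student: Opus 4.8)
The plan is to extract $\epsilon$ from the centroid-valued data $\Gamma(s,t)$ that the preceding discussion has already produced from the hypothetical invariant metric $\bar{B}$ on $\g \otimes \Sa$. Recall that for each pair $s,t$ the form $\mathcal{L}(s,t)(X,Y)=\bar{B}(X\otimes s,Y\otimes t)$ is invariant on $\g$, and since $B$ is non-degenerate there is a unique $\Gamma(s,t)\in\operatorname{Cent}(\g)$ with $B(\Gamma(s,t)(X),Y)=\mathcal{L}(s,t)(X,Y)$ (see \eqref{new version 5}). Because $\mathfrak{J}(\g)=\a\oplus\h^{\ast}$ is invariant under $\Gamma(s,t)$, writing $\Gamma(s,t)(x)=\epsilon(s,t)(x)+\Theta(s,t)(x)$ with $\epsilon(s,t)(x)\in\h$ defines a map $\epsilon(s,t)\colon\h\to\h$, and its bilinearity in $(s,t)$ is inherited from the bilinearity of $\bar{B}$, the linearity of the correspondence $\mathcal{L}(s,t)\mapsto\Gamma(s,t)$, and the projection onto the $\h$-summand. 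To see $\epsilon(s,t)\in\operatorname{Cent}(\h)$ I would restrict the centroid identity $\Gamma(s,t)([x,y])=[x,\Gamma(s,t)(y)]$ to $x,y\in\h$ and project onto $\h$ using the bracket formula \eqref{corchete de ext abeliana}, which yields $\epsilon(s,t)([x,y]_{\h})=[x,\epsilon(s,t)(y)]_{\h}$.

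For part \textbf{(i)} the plan is to exploit \eqref{restriccion centroides}, which gives $\Gamma(s,t)\vert_{[\g,\g]}=\Gamma(st,1)\vert_{[\g,\g]}$. From $\Gamma(s,t)([X,Y])=[\Gamma(s,t)(X),Y]=[\Gamma(st,1)(X),Y]=\Gamma(st,1)([X,Y])$ one deduces $\Gamma(s,t)(X)-\Gamma(st,1)(X)\in Z(\g)\subset\mathfrak{J}(\g)$ for all $X$. Applying this with $X=x\in\h$ and decomposing along $\g=\h\oplus\mathfrak{J}(\g)$, the central part is absorbed by the $\mathfrak{J}(\g)$-components $\Theta$, so the $\h$-components satisfy $\epsilon(s,t)(x)-\epsilon(st,1)(x)\in\h\cap\mathfrak{J}(\g)=\{0\}$. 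Hence $\epsilon(s,t)=\epsilon(st,1)$.

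Part \textbf{(ii)} is where I expect the main obstacle, and it is where the non-abelian hypothesis on $\h$ enters. The plan is contrapositive: suppose $s'$ satisfies $\epsilon(s',t)=0$ for all $t$. Using the compatibility \eqref{new version 7} between $\vartheta$ and $\epsilon$, I would show that $\Gamma(s',t)$ annihilates every $R(x)(U)=[x,U]$ with $x\in\h$ and $U\in\mathfrak{J}(\g)$; feeding this back through \eqref{new version 5} gives $\bar{B}(R(x)(U)\otimes s',Y\otimes t)=0$ for all $Y\in\g$ and $t\in\Sa$. The delicate step is now to invoke non-degeneracy of $\bar{B}$ to pass from the vanishing of these pairings to $R(x)(U)\otimes s'=0$ in $\g\otimes\Sa$; if $s'\neq 0$ this forces $R(x)(U)=[x,U]=0$, i.e.\ $[\h,\mathfrak{J}(\g)]=\{0\}$. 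Since $\h^{\ast}=\ide(\g)\subset\mathfrak{J}(\g)$, this means $\ad_{\h}^{\ast}(\h)(\h^{\ast})=\{0\}$, which is precisely the statement that $\h$ is abelian, contradicting the hypothesis. Therefore $s'=0$, which is exactly \textbf{(ii)}.
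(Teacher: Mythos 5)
Your proposal is correct and takes essentially the same route as the paper: the paper proves this theorem via the discussion preceding its statement, constructing $\Gamma(s,t)$ from $\bar{B}$ via \eqref{new version 5}, extracting $\epsilon$ as the $\h$-component, obtaining (i) from $\Gamma(s,t)(\g)-\Gamma(st,1)(\g)\subset Z(\g)\subset\mathfrak{J}(\g)$ and $\h\cap\mathfrak{J}(\g)=\{0\}$, and obtaining (ii) from \eqref{new version 7}, non-degeneracy of $\bar{B}$, and the fact that $[\h,\h^{\ast}]=\ad_{\h}^{\ast}(\h)(\h^{\ast})=\{0\}$ would force $\h$ to be abelian. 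All of your steps, including the delicate passage from $\bar{B}(R(x)(U)\otimes s',Y\otimes t)=0$ to $R(x)(U)\otimes s'=0$ and then to $R(x)(U)=0$ when $s'\neq 0$, match the paper's argument.
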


\section{Invariant forms on nilpotent Lie algebras} 

Let $(\g,[\,\cdot\,,\,\cdot\,])$ be a finite dimensional nilpotent Lie algebra over a field $\F$ of zero characteristic. Let $\mathcal{V}$ be the vector space generated by all the symmetric and bilinear forms $f:\g \times \g \to \F$. Let $\varrho:\g \to \gl(\mathcal{V})$ be the map defined by:
\begin{equation}\label{definicion de varrho}
\varrho(x)(f)(y,z)=-f([x,y],z)-f(y,[x,z]),\quad \text{ for all }x,y,z \in \g.
\end{equation}
Then $\varrho$ is a representation of $(\g,[\,\cdot\,,\,\cdot\,])$ in $\mathcal{V}$. Due to $(\g,[\,\cdot\,,\,\cdot\,])$ is nilpotent, by \textbf{Thm. 3.3} in \cite{Hum}, there exists a non-zero $f$ in $\mathcal{V}$ such that $\varrho(x)(f)=0$ for all $x$ in $\g$. From \eqref{definicion de varrho}, this amounts to say that $f$ is an invariant form in $(\g,[\,\cdot\,,\,\cdot\,])$. 
\smallskip

For finite dimensional nilpotent Lie algebras, the above shows that there exists an invariant, symmetric and non-zero bilinear form in any nilpotent Lie algebra. To find out whether it is non-degenerate, we can associate to such a bilinear form, a non-associative algebra with unit from which we can recover the Lie algebra structure in $\g$ and we can prove that this non-associative algebra is simple if and only if the bilinear form is non-degenerate.

\begin{Theorem}\label{teorema 2}{\sl
Let $(\g,[\,\cdot\,,\,\cdot\,])$ be a finite dimensional nilpotent Lie algebra over field $\F$ of zero characteristic. Let $f$ be an invariant, symmetric and non-zero bilinear form on $(\g,[\,\cdot\,,\,\cdot\,])$. Let $\mathcal{A}_f=\F \times \g$ be the algebra with product defined by: 
\begin{equation}\label{definicion de producto en A}
(\xi,x)(\eta,y)=(\,\xi \,\eta+f(x,y)\,,\,\xi\, y+\eta\, x+\frac{1}{2}[x,y]\,),
\end{equation}
for all $\xi,\eta$ in $\F$ and $x,y$ in $\g$. 
\smallskip

\textbf{(i)} If $\dim \g>1$, then $f$ is non-degenerate if and only if $\mathcal{A}_f$ is a simple algebra.
\smallskip

\textbf{(ii)} Let $[\,\cdot\,,\,\cdot\,]_f$ be the commutator of \eqref{definicion de producto en A}. Then $\mathcal{A}_f/\F (1,0)$ with bracket induced by $[\,\cdot\,,\,\cdot\,]_f$, is a Lie algebra isomorphic to $(\g,[\,\cdot\,,\,\cdot\,])$.
}
\end{Theorem}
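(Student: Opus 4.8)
The plan is to analyze the algebra $\mathcal{A}_f=\F \times \g$ directly and extract the Lie structure first, since part \textbf{(ii)} is purely computational and sets up the framework for part \textbf{(i)}. First I would verify that $(1,0)$ is a two-sided unit for the product in \eqref{definicion de producto en A}: computing $(1,0)(\eta,y)=(\eta+f(0,y),y+0+\tfrac12[0,y])=(\eta,y)$, and symmetrically on the other side. Then, to prove \textbf{(ii)}, I would compute the commutator $[(\xi,x),(\eta,y)]_f=(\xi,x)(\eta,y)-(\eta,y)(\xi,x)$. Because $f$ is symmetric, the scalar components cancel, and because $[x,y]$ is skew the linear terms $\xi y+\eta x$ cancel as well, leaving $[(\xi,x),(\eta,y)]_f=(0,[x,y])$. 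This shows $\F(1,0)$ is central for $[\,\cdot\,,\,\cdot\,]_f$, so the quotient $\mathcal{A}_f/\F(1,0)$ inherits a well-defined bracket, and the map $(0,x)+\F(1,0)\mapsto x$ is manifestly a Lie algebra isomorphism onto $(\g,[\,\cdot\,,\,\cdot\,])$.

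For part \textbf{(i)}, I would prove both directions by relating ideals of $\mathcal{A}_f$ to the radical of $f$. For the easy direction, suppose $f$ is degenerate and let $z\in\g$ be a nonzero element of $\Rad(f)$, meaning $f(z,y)=0$ for all $y\in\g$. The invariance of $f$ gives $f([x,z],y)=-f(z,[x,y])=0$, so $\Rad(f)$ is an ideal of the Lie algebra; I would then check that the subspace $\{0\}\times\Rad(f)$ (or a suitable enlargement using the unit) is a proper nonzero two-sided ideal of $\mathcal{A}_f$, contradicting simplicity. The key computation is that for $z\in\Rad(f)$ the product $(\xi,x)(0,z)=(f(x,z),\xi z+\tfrac12[x,z])=(0,\xi z+\tfrac12[x,z])$ again lands in $\{0\}\times\Rad(f)$, using that $\Rad(f)$ is a Lie ideal and $f(x,z)=0$; since $\dim\g>1$ forces this ideal to be proper, $\mathcal{A}_f$ is not simple.

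For the harder converse, I would assume $f$ is non-degenerate and let $\mathcal{I}$ be a nonzero two-sided ideal of $\mathcal{A}_f$; the goal is $\mathcal{I}=\mathcal{A}_f$. The strategy is to show $\mathcal{I}$ must contain the unit $(1,0)$. Pick a nonzero $(\xi,x)\in\mathcal{I}$. Multiplying by elements $(0,y)$ and taking appropriate combinations, the scalar component $f(x,y)$ of the product $(\xi,x)(0,y)=(f(x,y),\xi y+\tfrac12[x,y])$ can be made nonzero for some $y$ if $x\neq0$ (by non-degeneracy of $f$), which forces an element of $\mathcal{I}$ with nonzero scalar part; from there I would manufacture the unit. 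If instead every element of $\mathcal{I}$ has zero linear part, then $\mathcal{I}\subset\F\times\{0\}$, and since $(\lambda,0)(\eta,y)=(\lambda\eta,\lambda y)$ produces nonzero linear components, the ideal generated immediately fills out $\mathcal{A}_f$. The main obstacle is this converse: one must carefully track how the scalar and vector components of products interact, exploiting non-degeneracy of $f$ to guarantee that any nonzero ideal detects the scalar direction and hence swallows the unit. I expect the cleanest route is to show that for any nonzero ideal $\mathcal{I}$, the projection of $\mathcal{I}$ to the $\g$-component together with its scalar detections, combined with non-degeneracy, forces $(1,0)\in\mathcal{I}$.
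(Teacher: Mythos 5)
Your part \textbf{(ii)} and the ``simple $\Rightarrow$ non-degenerate'' half of part \textbf{(i)} are correct and essentially the paper's own argument: the commutator computation $[(\xi,x),(\eta,y)]_f=(0,[x,y])$, and the observation that $\{0\}\times\Rad(f)$ is a two-sided ideal of $\mathcal{A}_f$ (the paper phrases this directly rather than contrapositively; note also that this ideal is proper simply because every element of it has zero scalar component, so it misses $(1,0)$ --- the hypothesis $\dim\g>1$ has nothing to do with its properness).

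The genuine gap is in the converse, and it sits exactly at your sentence ``from there I would manufacture the unit.'' Having an element $(\xi,x)\in\mathcal{I}$ with $\xi\neq 0$ does not let you isolate $(1,0)$: you cannot subtract $\xi(1,0)$, because that element need not lie in $\mathcal{I}$. The paper's route is to multiply $(\xi,x)$ by $(-\xi,x)$, which yields $(f(x,x)-\xi^{2},0)\in\mathcal{I}$; if $f(x,x)\neq\xi^{2}$ one gets the unit, but otherwise one is forced into the case where the quadratic relation $\xi^{2}=f(x,x)$ holds for \emph{every} element of the ideal, and disposing of that case is the real content of the theorem. The paper polarizes this relation (apply it to products $(\xi,x)(\eta,y)$, using invariance of $f$) to obtain $4f(x,y)f(x,z)=4\xi^{2}f(y,z)-f([x,[x,y]],z)$, hence by non-degeneracy $4f(x,y)\,x=4\xi^{2}y-\ad(x)^{2}(y)$; applying $\ad(x)$ shows that $\ad(x)^{2}$ has the nonzero eigenvalue $4\xi^{2}$ unless $\ad(x)^{2}=0$, and the case $\ad(x)^{2}=0$ forces $\g=\F x$. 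The first alternative contradicts nilpotency of $\ad(x)$, the second contradicts $\dim\g>1$. Your sketch never invokes either hypothesis in this direction, and both are indispensable: for $\g=\F$ abelian with $f(x,y)=xy$ (non-degenerate, invariant), $\mathcal{A}_f\cong\F\times\F$ and the ``null'' element $(1,1)$, which has nonzero scalar part and satisfies $\xi^{2}=f(x,x)$, generates the proper diagonal ideal. So an element of the ideal with nonzero scalar part is not, by itself, enough to swallow the unit; the step you deferred is precisely where nilpotency and $\dim\g>1$ must enter, and without it the proof is incomplete.
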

\begin{proof}
$\,$

\textbf{(i)} We define equality, addition and multiplication by scalars
in $\mathcal{A}_{f}$ in the obvious manner. Suppose $f$ is non-degenerate and let $I$ be a non-zero right ideal of $\mathcal{A}_f$. We shall prove that $(1,0)$ belongs to $I$. We assert that there exists an element $(\xi,x)$ in $I$ with $\xi \neq 0$. Let $(\xi,x)$ be a non-zero element in $I$. If $\xi=0$, then $x$ is non-zero. Due to $f$ is non-degenerate, there exists $y$ in $\g$ such that $f(x,y)$ is non-zero. By \eqref{definicion de producto en A}, we have that $(0,x)(\eta,y)=\left(f(x,y),\eta x+\frac{1}{2}[x,y]\right)$ belongs to $I$, which proves our assertion.
\smallskip

Let $(\xi,x)$ be in $I$, then $(\xi,x)(-\xi,x)=(-\xi^2+f(x,x),0)$ is in $I$. If $-\xi^2+f(x,x)$ is non-zero, then $(1,0)$ belongs to $I$ and consequently $I=\mathcal{A}_f$. Thus, from now on we assume the following:
\begin{equation}\label{clubsuit}
\text{ For all }(\xi,x) \text{ in } I,\quad \xi^2=f(x,x)\tag{$\clubsuit$}.
\end{equation}
Let $(\xi,x)$ be in $I$, with $\xi \neq 0$ and $x \neq 0$ (if $x=0$ we are done), and let $(\eta,y)$ be an arbitrary element in $\mathcal{A}_f$. Then $(\xi,x)(\eta,y)=(\xi \eta+f(x,y),\xi y+\eta x+\frac{1}{2}[x,y])$ belongs to $I$. Applying \eqref{clubsuit}, we obtain:
\begin{equation}\label{T2-1}
f(x,y)^2=\xi^2 f(y,y)+\frac{1}{4}f([x,y],[x,y]).
\end{equation}
Due to $y$ is arbitrary, we apply \eqref{T2-1} to $y+z$ in $\g$ and using that $f$ is invariant we get:
\begin{equation}\label{T2-2}
4f(x,y)f(x,z)=4\xi^2f(y,z)-f([x,[x,y]],z).
\end{equation}
Using that $f$ is non-degenerate, from \eqref{T2-2} it follows:
\begin{equation}\label{T2-3}
4f(x,y)x=4\xi^2 y-\ad(x)^2(y)
\end{equation}
Applying the adjoint representation $\ad(x)$ in \eqref{T2-3}, we have:
\begin{equation}\label{T2-4}
\ad(x)^2(\ad(x)(y))=4\xi^2\ad(x)(y).
\end{equation}
If $\ad(x)^2=0$ then from \eqref{T2-2} we get: 
\begin{equation}\label{T2-5}
f(x,y)f(x,z)=\xi^2 f(y,z)
\end{equation}
As $f$ is non-degenerate and $\xi$ is non-zero, from \eqref{T2-5} we obtain $y=\xi^{-2}f(x,y)x$. Due to $y$ is arbitrary, we deduce that $\g=\F x$ and $\dim \g=1$, a contradiction.
\smallskip

If $\ad(x)^2$ is non-zero, then $\ad(x) \neq 0$ and there exists $y$ in $\g$ such that $\ad(x)(y) \neq 0$. Then $\ad(x)(y) \neq 0$ is an eigenvector of $\ad(x)$ with eigenvalue $4 \xi^2 \neq 0$ by \eqref{T2-4}, which contradicts that $\ad(x)$ is nilpotent.
\smallskip

Now suppose that $\mathcal{A}_f$ is a simple algebra and we will prove that $f$ is non-degenerate. Let $R$ be the subspace of $\g$ generated by those elements $x$ such that $f(x,y)=0$ for all $y$ in $\g$. We claim that $\{0\} \times R$ is a two-sided ideal of $\mathcal{A}_f$. Indeed, as $f$ is invariant it is clear that $R$ is an ideal of $\g$. Let $x$ be in $R$ and $(\eta,y)$ be in $\mathcal{A}_f$. Then $(0,x)(\eta,y)=(0,\eta x+\frac{1}{2}[x,y])=(\eta,y)(0,x)$  belongs to $\{0\} \times R$. Due to $\mathcal{A}_f$ is simple and $f$ is non-zero, then $\{0\} \times R$ is zero, which implies that $R=\{0\}$ and $f$ is non-degenerate. 
\smallskip

\textbf{(ii)} Let $(\xi,x)$ and $(\eta,y)$ be in $\mathcal{A}_f$, then $[(\xi,x),(\eta,y)]_f=(0,[x,y])$. The kernel of the projection map $(\xi,x) \mapsto x$, between $\mathcal{A}_f$ and $\g$, is $\F (1,0)$. Therefore, the map $(\xi,x)+\F (1,0) \mapsto x$, is a Lie algebra isomorphism between $\mathcal{A}_f/\F (1,0)$, with bracket induced by $[\,\cdot\,,\,\cdot\,]_f$, and $(\g,[\,\cdot\,,\,\cdot\,])$.

\end{proof}

\textbf{Thm. \ref{teorema 2}} can be considered as a generalization of the results given in \cite{Jenner}, from the abelian to the nilpotent case.  

\section*{Acknowledgements}

The author thanks the support provided by post-doctoral fellowship CONAHCYT 769309. The author declares that he has no conflicts to disclose.

\bibliographystyle{amsalpha}

\end{document}